\numberwithin{equation}{section}
\newtheorem{Thm}{Theorem}[section]
\newtheorem{Prop}[Thm]{Proposition}
\newtheorem{Lem}[Thm]{Lemma}
\theoremstyle{definition}
\newtheorem{Rem}[Thm]{Remark}
\newtheorem{Expl}[Thm]{Example}
\newcommand{\R}{\mathbb{R}}
\newcommand{\Z}{\mathbb{Z}}
\newcommand{\E}{\operatorname{E}}
\newcommand{\e}{{\rm e}}
\newcommand{\cA}{\mathscr{A}}
\newcommand{\cF}{\mathscr{F}}
\newcommand{\cK}{\mathscr{K}}
\renewcommand{\b}{\bar}
\newcommand{\comb}{\text{\rm comb}}
\newcommand{\CAT}{\operatorname{CAT}}
\newcommand{\del}{\delta}
\newcommand{\Del}{\Delta}
\newcommand{\di}{\partial}
\newcommand{\diam}{\operatorname{diam}}
\newcommand{\eps}{\varepsilon}
\newcommand{\es}{\emptyset}
\newcommand{\Gam}{\Gamma}
\newcommand{\gam}{\gamma}
\newcommand{\lam}{\lambda}
\newcommand{\id}{\operatorname{id}}
\newcommand{\olX}{\,\overline{\!X}}
\newcommand{\olR}{\,\overline{\!R}}
\renewcommand{\rho}{\varrho}
\newcommand{\rk}{\operatorname{rk}}
\newcommand{\sig}{\sigma}
\newcommand{\sm}{\setminus}
\newcommand{\spt}{\operatorname{spt}}
\newcommand{\sub}{\subset}
\title{Convex geodesic bicombings and hyperbolicity}
\author{Dominic Descombes \& Urs Lang}
\date{April 19, 2014}
\begin{document}


\maketitle

\begin{abstract}
A geodesic bicombing on a metric space selects for every pair of points a 
geodesic connecting them. We prove existence and uniqueness results for 
geodesic bicombings satisfying different convexity conditions.
In combination with recent work by the second author on injective hulls, 
this shows that every word hyperbolic group acts geometrically on a proper,
finite dimensional space $X$ with a unique (hence equivariant) convex geodesic
bicombing of the strongest type. Furthermore, the Gromov boundary of~$X$
is a $Z$-set in the closure of~$X$, and the latter is a metrizable absolute 
retract, in analogy with the Bestvina--Mess theorem on the Rips complex. 
\end{abstract}
 

\section{Introduction}

By a {\em geodesic bicombing\/} $\sig$ on a metric space $(X,d)$ we mean a map 
\[
\sig \colon X \times X \times [0,1] \to  X
\]
that selects for every pair $(x,y) \in X \times X$ a constant speed geodesic
$\sig_{xy} := \sig(x,y,\cdot)$ from $x$ to $y$ (that is, 
$d(\sig_{xy}(t),\sig_{xy}(t')) = |t - t'|d(x,y)$ for all $t,t' \in [0,1]$, 
and $\sig_{xy}(0) = x$, $\sig_{xy}(1) = y$). 
We are interested in geodesic bicombings satisfying one of the 
following two conditions, 
each of which implies that $\sig$ is continuous and, hence, $X$ 
is contractible. We call
$\sig$ {\em convex\/} if
\begin{equation} \label{eq:convex}
\text{the function $t \mapsto d(\sig_{xy}(t),\sig_{x'y'}(t))$ is 
convex on $[0,1]$}
\end{equation}
for all $x,y,x',y' \in X$, and we say that $\sig$ is {\em conical\/} if 
\begin{equation} \label{eq:conical}
d(\sig_{xy}(t),\sig_{x'y'}(t)) \le (1-t)\,d(x,x') + t\,d(y,y')
\quad \text{for all $t \in [0,1]$} 
\end{equation}
and $x,y,x',y' \in X$. 
Obviously every convex geodesic bicombing is conical, but the reverse 
implication does not hold in general (see Example~\ref{Expl:not-convex}). 
For this, in order to pass condition~\eqref{eq:conical} to subsegments,
one would need to know in addition that $\sig$ is {\em consistent\/} in the 
sense that 
\begin{equation} \label{eq:consistent}
\sig_{pq}(\lam) = \sig_{xy}((1-\lam)s + \lam t)
\end{equation} 
whenever $x,y \in X$, $0 \le s \le t \le 1$, 
$p := \sig_{xy}(s)$, $q := \sig_{xy}(t)$, and $\lam \in [0,1]$.
Thus, every conical and consistent geodesic bicombing is convex.
Furthermore, we will say that $\sig$ is {\em reversible\/} if 
\begin{equation} \label{eq:reversible}
\sig_{xy}(t) = \sig_{yx}(1-t) \quad \text{for all $t \in [0,1]$} 
\end{equation}
and $x,y \in X$. This property is not imposed here but will 
be obtained at no extra cost later.
Basic examples of convex geodesic bicombings are given by the linear 
geodesics $\sig_{xy}(t) := (1-t)x + ty$ in a linearly convex subset $X$
of a normed space or by the unique geodesics
$\sig_{xy} \colon [0,1] \to X$ in a $\CAT(0)$ space~\cite{Bal,BriH} or 
a Busemann space~\cite{AleB,Pap} (where $t \mapsto d(\sig(t),\tau(t))$ 
is convex for {\em every\/} pair of geodesics 
$\sig,\tau \colon [0,1] \to X$.)  
An additional source of conical geodesic bicombings is the fact that 
this weaker notion behaves well under $1$-Lipschitz retractions 
(see Lemma~\ref{Lem:retr}). 
Consistent and reversible geodesic bicombings have also been 
employed in~\cite{HitL, FoeL}. 

In view of the primary examples, the existence of a convex or conical 
geodesic bicombing on a metric space may be regarded as a weak 
(but non-coarse) global nonpositive curvature condition. One thus arrives 
at the following hierarchy of properties 
$\text{(A)} \Rightarrow \text{(B)} \Rightarrow \text{(C)} \Rightarrow 
\text{(D)} \Rightarrow \text{(E)}$
for a geodesic metric space $X$:
\begin{itemize}
\item[(A)] $X$ is a $\CAT(0)$ space;
\item[(B)] $X$ is a Busemann space;
\item[(C)] $X$ admits a convex and consistent geodesic bicombing;
\item[(D)] $X$ admits a convex geodesic bicombing;
\item[(E)] $X$ admits a conical geodesic bicombing.
\end{itemize}
Clearly, if $X$ is uniquely geodesic,
then $\text{(E)} \Rightarrow \text{(B)}$.
When $X$ is a normed real vector space, (A)~holds if and only if the norm 
is induced by an inner product (\cite[Proposition~II.1.14]{BriH}), 
(B)~holds if and only if the norm is strictly convex 
(\cite[Proposition~8.1.6]{Pap}), and (C)~is always satisfied. 
In the general case, (C)~is stable under limit 
operations, whereas~(B) is not (compare Sect.~10 in~\cite{Kle}).
It is unclear whether $\text{(E)} \Rightarrow \text{(D)}$ and 
$\text{(D)} \Rightarrow \text{(C)}$ without further conditions. 
One of the purposes of this paper is to establish these implications 
under suitable assumptions, and to address questions of uniqueness.
Our first result refers to~(E) and~(D).

\begin{Thm} \label{Thm:intro-1}
Let $X$ be a proper metric space with a conical geodesic bicombing. 
Then $X$ also admits a convex geodesic bicombing. 
\end{Thm}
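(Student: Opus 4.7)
The plan is to produce the convex bicombing as a common fixed point of a family of splitting operators on the compact space of conical bicombings on $X$, with properness supplying the compactness.

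First, for each $\lambda \in (0,1)$ I introduce a splitting operator $T^{(\lambda)}$ by setting $(T^{(\lambda)}\tau)_{xy}$ to equal $\tau_{x,m}$ on $[0,\lambda]$ and $\tau_{m,y}$ on $[\lambda,1]$ (each linearly reparametrised), where $m := \tau_{xy}(\lambda)$. A direct use of~\eqref{eq:conical} shows that $T^{(\lambda)}\tau$ is again a conical geodesic bicombing, and that $T^{(\lambda)}$ is continuous jointly in $(\tau,\lambda)$ in the pointwise topology. The key observation is that a conical $\tau$ satisfies~\eqref{eq:consistent}---and is therefore convex---if and only if $T^{(\lambda)}\tau = \tau$ for every $\lambda \in (0,1)$.

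Second, properness of $X$ makes the set $\cC$ of conical geodesic bicombings compact in the pointwise topology: each value $\tau_{xy}(t)$ is confined to the compact intersection $\bar{B}(x,t\,d(x,y)) \cap \bar{B}(y,(1-t)\,d(x,y))$, and all defining conditions of a conical bicombing are closed under pointwise limits. Fixing a non-principal ultrafilter $\omega$ on $\mathbb{N}$ and setting $T := T^{(1/2)}$, the ultrafilter limit $\bar\sigma := \lim_{n\to\omega} T^n\sigma \in \cC$ is automatically a fixed point of $T$: shift invariance of $\omega$-limits gives $T\bar\sigma = \lim_{n\to\omega} T^{n+1}\sigma = \bar\sigma$. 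This yields consistency of $\bar\sigma$ along the canonical binary tree of dyadic subdivisions.

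Third, to promote this to invariance under the whole family $\{T^{(\lambda)}\}$, I would enumerate $(0,1) \cap \mathbb{Q} = \{\lambda_1,\lambda_2,\dots\}$ and iterate the $T^{(\lambda_k)}$'s in an interleaved fashion, then take a further ultrafilter limit in $\cC$ to produce $\bar\sigma^\ast$ fixed by every $T^{(\lambda_k)}$. Continuity of $T^{(\lambda)}$ in $\lambda$ together with density of $\mathbb{Q} \cap (0,1)$ would then give $T^{(\lambda)}\bar\sigma^\ast = \bar\sigma^\ast$ for every $\lambda \in (0,1)$, hence~\eqref{eq:consistent}. Combined with conicality, which is preserved throughout, this yields~\eqref{eq:convex} as noted in the introduction.

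I expect the third step to be the main obstacle: the operators $T^{(\lambda)}$ do not commute, so a direct Markov--Kakutani argument is unavailable, and a naive iterate-and-limit scheme runs the risk that each application of $T^{(\lambda_{k+1})}$ destroys invariance under $T^{(\lambda_k)}$. Arranging the ultrafilter limits to deliver a genuine common fixed point---perhaps by identifying a suitable amenable structure on the splitting semigroup acting on $\cC$, or by a careful nested-ultrafilter construction indexed by finite subsets of $\mathbb{Q}\cap(0,1)$---is where the real work of the proof lies.
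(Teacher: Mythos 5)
Your step 1 is fine: the operators $T^{(\lambda)}$ do preserve the class of conical geodesic bicombings, a conical bicombing fixed by all of them is consistent and hence convex, and for proper $X$ the set of conical bicombings is indeed compact in the pointwise topology (and $T^{(\lambda)}$ is continuous on it, using the equicontinuity supplied by~\eqref{eq:conical}). The fatal gap is the fixed-point step, and it already occurs in your step 2, not only in step 3. Ultrafilter limits are \emph{not} shift-invariant: no ultrafilter on $\mathbb{N}$ is invariant under $n \mapsto n+1$ (test it on the partition into even and odd integers; concretely, for $a_n = (-1)^n$ one has $\lim_{n\to\omega} a_{n+1} = -\lim_{n\to\omega} a_n$). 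So while $\bar\sigma := \lim_{n\to\omega} T^n\sigma$ exists by compactness and $T\bar\sigma = \lim_{n\to\omega} T^{n+1}\sigma$ by continuity, there is no reason for this to equal $\bar\sigma$. More fundamentally, compactness of the ambient space plus continuity of $T$ cannot produce a fixed point: there is no affine structure on the set of bicombings, so Markov--Kakutani and Schauder are unavailable, and continuous self-maps of compacta can be fixed-point free. What you would actually need is a proof that the iterates (or some modification of them) \emph{converge}, and that is where all the analytic work lies. A further warning sign: your plan, if completed, would show that every proper space with property~(E) admits a conical \emph{and consistent} bicombing, i.e.\ property~(C); the paper explicitly leaves the passage from~(D) to~(C) open in general and only obtains it under a finite-dimensionality hypothesis (Theorem~\ref{Thm:intro-2}), while Theorem~\ref{Thm:intro-1} itself claims only convexity. (Note also that a fixed point of $T^{(1/2)}$ alone would not suffice: consistency along dyadic subdivisions together with~\eqref{eq:conical} only forces $t \mapsto d(\sig_{xy}(t),\sig_{x'y'}(t))$ to lie below its chords over dyadic intervals, which does not imply convexity.)

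The paper's proof also rests on a subdivision idea but never asks for an exact fixed point of a splitting operator. It introduces $1/n$-discrete convexity and shows (Proposition~\ref{Prop:discr-convex}) that a conical, $1/n$-discretely convex bicombing can be upgraded to a conical, $1/(2n-1)$-discretely convex one by the ``cat's cradle'' construction: for fixed $x,y$ one sets $m := 2n-1$, $q_0 := \sig_{xy}(n/m)$ and alternately $p_i := \sig_{xq_{i-1}}(1-1/n)$, $q_i := \sig_{p_iy}(1/n)$; the conical inequality yields the contraction estimates $d(p_i,p_{i+1}) \le (1-1/n)\,d(q_{i-1},q_i)$ and $d(q_i,q_{i+1}) \le (1-1/n)\,d(p_i,p_{i+1})$, so these sequences are Cauchy and the new bicombing is stitched together from their limits. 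Each stage thus achieves an \emph{exact} discrete convexity at a strictly finer scale, and only at the very end is a compactness (Arzel\`a--Ascoli) limit taken --- this is where properness enters --- with convexity passing to the limit because the grids $(1/n_k)\Z \cap [0,1]$ become dense. If you want to salvage your scheme, you must replace the ultrafilter step by a convergence argument of this kind; the splitting-operator formalism by itself does not provide one.
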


The idea of the proof is to resort to a discretized convexity condition
and then to gradually decrease the parameter of discreteness by 
the ``cat's cradle'' construction from~\cite{AleB}. The passage 
from~(D) to~(C) appears to be more subtle.
We first observe that the linear segments 
$t \mapsto (1-t)x + ty$ in an arbitrary normed space $X$ may be characterized 
as the curves $\gam \colon [0,1] \to X$ with the property that
$t \mapsto d(z,\gam(t))$ is convex on $[0,1]$
for every $z \in X$ (see Theorem~\ref{Thm:straight-in-vs}). 
We therefore term curves with this property {\em straight\/} 
(in a metric space $X$). 
Since the geodesics of a convex bicombing are necessarily 
straight, the above observation shows that normed spaces have no such 
bicombing other than the linear one. By contrast, there are instances of
non-linear straight geodesics in compact convex subspaces of normed spaces
as well as multiple convex geodesic bicombings in compact metric spaces 
(see Examples~\ref{Expl:nonlinear-straight} and~\ref{Expl:multiple-convex}). 
Nevertheless, we prove a strong uniqueness property of straight curves 
(Proposition~\ref{Prop:unique-straight}) in spaces satisfying a certain 
finite dimensionality assumption, introduced by Dress in~\cite{Dre} 
and explained further below. 
This gives the following result regarding items~(D) and~(C).

\begin{Thm} \label{Thm:intro-2}
Let $X$ be a metric space of finite combinatorial dimension in the sense 
of Dress, or with the property that every bounded subset has finite 
combinatorial dimension. 
Suppose that $X$ possesses a convex geodesic bicombing $\sig$.
Then $\sig$ is consistent, reversible, and unique, that is,
$\sig$ is the only convex geodesic bicombing on $X$. 
\end{Thm}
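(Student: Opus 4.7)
The plan is to reduce all three assertions — uniqueness, reversibility, and consistency — to the uniqueness of straight curves supplied by Proposition~\ref{Prop:unique-straight}. The key preliminary observation is that every geodesic of a convex bicombing is straight: applying the convexity condition~\eqref{eq:convex} with $x' = y' = z$, and noting that $\sig_{zz}$ must be the constant curve at $z$ because $d(\sig_{zz}(t),\sig_{zz}(t')) = |t-t'|\,d(z,z) = 0$, one reads off that $t \mapsto d(\sig_{xy}(t),z)$ is convex on $[0,1]$ for every $z \in X$.

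Given this, the three claims follow formally. For uniqueness, any second convex bicombing $\sig'$ on $X$ produces a straight geodesic $\sig'_{xy}$ with the same endpoints as $\sig_{xy}$, so Proposition~\ref{Prop:unique-straight} yields $\sig_{xy} = \sig'_{xy}$. For reversibility, the reversed curve $t \mapsto \sig_{yx}(1-t)$ is a straight geodesic from $x$ to $y$, since straightness is invariant under the affine change of variable $t \mapsto 1-t$; uniqueness then identifies it with $\sig_{xy}$. For consistency, if $p = \sig_{xy}(s)$ and $q = \sig_{xy}(t)$ with $s \le t$, the reparametrization $u \mapsto \sig_{xy}(s + u(t-s))$ is a constant speed geodesic from $p$ to $q$ that remains straight, because each function $u \mapsto d(z,\sig_{xy}(s+u(t-s)))$ is the composition of a convex function with an affine map. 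Uniqueness identifies this reparametrized arc with $\sig_{pq}$, which is precisely~\eqref{eq:consistent}.

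In the variant where only every bounded subset of $X$ has finite combinatorial dimension, I would localize the argument: the straight curves being compared are continuous images of $[0,1]$, hence have bounded image, so both lie inside some closed ball $B \sub X$. Straightness in $X$ trivially implies straightness in the subspace $B$ (the convexity condition is imposed on a smaller set of test points $z$), so Proposition~\ref{Prop:unique-straight} applied inside the finite-dimensional $B$ still forces equality. The substance of the theorem is carried entirely by Proposition~\ref{Prop:unique-straight}; once that is in hand, the rest is essentially formal, and I expect no real obstacle beyond confirming the stability of straightness under affine reparametrization and under restriction of the ambient space — both routine consequences of the definition.
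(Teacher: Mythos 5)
Your proposal is correct and follows exactly the route the paper takes: the paper states that Theorem~\ref{Thm:intro-2} is an immediate corollary of Proposition~\ref{Prop:unique-straight} (geodesics of a convex bicombing are straight, and straightness is preserved under affine reparametrization, so uniqueness, reversibility, and consistency all follow), with the bounded-subset case handled by restricting to a $\sig$-convex ball just as you do. Your write-up merely fills in the details the paper leaves implicit.
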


Our interest in Theorems~\ref{Thm:intro-1} and~\ref{Thm:intro-2}
comes from the fact that property~(E) holds in particular for all
absolute $1$-Lipschitz retracts $X$. (To see this, embed $X$ isometrically 
into $\ell_\infty(X)$ and retract the linear geodesics to $X$; compare 
again Lemma~\ref{Lem:retr}). These correspond to the injective 
objects in the category of metric spaces and $1$-Lipschitz maps:
$X$ is {\em injective\/} if for every isometric inclusion 
$A \subset B$ of metric spaces and every $1$-Lipschitz map 
$f\colon A \to X$ there exists a $1$-Lipschitz extension 
$\bar f \colon B \to X$ of $f$. Basic examples include the real line, 
all $\ell_\infty$ spaces, and all metric ($\R$-)trees. Furthermore,
by a 50-year-old result of Isbell~\cite{Isb}, every metric space $X$ 
possesses an essentially unique {\em injective hull\/} $\E(X)$. 
This yields a compact metric space if $X$ is compact and a finite polyhedral 
complex with $\ell_\infty$ metrics on the cells in case $X$ is finite. 
Isbell's explicit construction was rediscovered and further explored 
by Dress~\cite{Dre}, who gave it the name {\em tight span}. 
The {\em combinatorial dimension\/} $\dim_\comb(X)$ of a general metric 
space $X$ is the supremum of the dimensions of the polyhedral complexes 
$\E(Y)$ for all finite subspaces~$Y$ of~$X$. In case $X$ is already injective, 
every such $\E(Y)$ embeds isometrically into $X$, so $\dim_\comb(X)$ is bounded
by the supremum of the topological dimensions of compact subsets of $X$.
Dress also gave a characterization of the condition $\dim_\comb \le n$
in terms of a $2(n+1)$-point inequality. We restate his result in 
Theorem~\ref{Thm:dress} and provide a streamlined proof in an appendix to 
this paper. 

In~\cite{L}, the second author proved that if $\Gamma$ is a Gromov hyperbolic 
group, endowed with the word metric with respect to some finite generating set,
then the injective hull $\E(\Gamma)$ is a proper, finite-dimensional 
polyhedral complex with finitely many isometry types of cells,
isometric to polytopes in $\ell_\infty$ spaces,  
and $\Gamma$ acts properly and cocompactly on $\E(\Gamma)$ by cellular 
isometries. Furthermore, after barycentric subdivision, the resulting 
simplicial $\Gamma$-complex is a model for the classifying 
space $\underbar{E}\Gamma$ for proper actions. 
Since $X = \E(\Gamma)$ satisfies property~(E),
Theorems~\ref{Thm:intro-1} and~\ref{Thm:intro-2} together now show that 
$\E(\Gamma)$ possesses a convex geodesic bicombing that is consistent,
reversible, and unique, hence equivariant with respect to the full 
isometry group of $\E(\Gamma)$. 
(The existence of an {\em equivariant conical\/} 
geodesic bicombing on $\E(\Gamma)$ was known before, 
see Proposition~3.8 in~\cite{L}, but the proof 
of that fact did not give any indication on the consistency and uniqueness
of the bicombing.) In particular, the following holds.

\begin{Thm} \label{Thm:intro-3}
Every word hyperbolic group $\Gamma$ acts properly and cocompactly by 
isometries on a proper, finite-dimensional metric space $X$ with a 
convex geodesic bicombing that is furthermore consistent, reversible, 
and unique, hence equivariant with respect to the isometry group of $X$.
\end{Thm}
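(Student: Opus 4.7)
The plan is to take $X := \E(\Gam)$, the injective hull of $\Gam$ equipped with a word metric for some finite generating set, and to verify that all hypotheses of Theorems~\ref{Thm:intro-1} and~\ref{Thm:intro-2} are met by this space. The action and the geometric properties of $X$ itself come for free from~\cite{L}: that paper shows that $\E(\Gam)$ is a proper, finite-dimensional polyhedral complex on which $\Gam$ acts properly and cocompactly by isometries. So the only thing remaining is to equip $X$ with a convex geodesic bicombing of the desired type.

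First I would produce a conical geodesic bicombing on $X$, using the remark from the introduction that every injective metric space (being an absolute $1$-Lipschitz retract) satisfies property~(E): isometrically embed $X$ into $\ell_\infty(X)$, take the linear bicombing there, and push it back to~$X$ via a $1$-Lipschitz retraction, invoking Lemma~\ref{Lem:retr}. Since $X$ is proper, Theorem~\ref{Thm:intro-1} then upgrades this conical bicombing to a convex one~$\sig$.

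Next I would verify the finite combinatorial dimension hypothesis of Theorem~\ref{Thm:intro-2}. Because $X=\E(\Gam)$ is itself injective, the inequality $\dim_\comb(X) \le \sup\{\dim_{\rm top}(K) : K \sub X \text{ compact}\}$ noted in the introduction applies, and the right-hand side is finite since $X$ is a finite-dimensional polyhedral complex. (Alternatively, bounded subsets of $X$ sit in finitely many cells, which are $\ell_\infty$-polytopes of bounded dimension, so bounded subsets have finite combinatorial dimension, which is the second alternative in the hypothesis.) Theorem~\ref{Thm:intro-2} then implies that $\sig$ is consistent, reversible, and the unique convex geodesic bicombing on $X$.

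Finally, equivariance is immediate from uniqueness: if $g$ is any isometry of $X$, then $(x,y,t) \mapsto g^{-1}(\sig(gx,gy,t))$ is again a convex geodesic bicombing on~$X$, hence equals~$\sig$ by uniqueness, which means $\sig$ is invariant under the full isometry group of~$X$, in particular under~$\Gam$. The main obstacle in this plan is really only bookkeeping: confirming that the finite-dimensionality output of~\cite{L} is strong enough to feed into Dress's combinatorial dimension framework used in Theorem~\ref{Thm:intro-2}; once that is verified, everything else is an application of the two preceding theorems.
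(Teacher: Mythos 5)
Your proposal is correct and follows essentially the same route as the paper: take $X=\E(\Gam)$, use the results of~\cite{L} for the proper cocompact action and the proper finite-dimensional polyhedral structure, obtain a conical bicombing from injectivity via Lemma~\ref{Lem:retr}, upgrade it with Theorem~\ref{Thm:intro-1}, and apply Theorem~\ref{Thm:intro-2} (with finite combinatorial dimension bounded, as the paper notes for injective spaces, by the topological dimension of compact subsets) to get consistency, reversibility, uniqueness, and hence equivariance. No gaps.
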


In a last part of the paper, we discuss the asymptotic geometry of 
complete metric spaces $X$ with a convex and consistent geodesic 
bicombing $\sig$. We define the geometric boundary $\di_\sig X$ 
in terms of geodesic rays consistent with $\sig$, and we 
equip $\olX_\sig = X \cup \di_\sig X$ with a natural metrizable 
topology akin to the cone topology in the case of $\CAT(0)$ spaces.
In view of Theorem~\ref{Thm:intro-3}, this unifies and generalizes the 
respective constructions for $\CAT(0)$ or Busemann spaces and for
hyperbolic groups. By virtue of the bicombing one can then give a rather
direct proof of the following result on~$\olX_\sig$. Recall that a metrizable 
space is an {\em absolute retract\/} if it is a retract of every metrizable 
space containing it as a closed subspace.

\begin{Thm} \label{Thm:intro-4}
Let $X$ be a complete metric space with a convex and consistent geodesic 
bicombing $\sig$. Then $\olX_\sig$ is contractible, locally 
contractible, and an absolute retract.
Moreover, $\di_\sig X$ is a $Z$-set in $\olX_\sig$, 
that is, for every open set $U$ in $\olX_\sig$ the inclusion 
$U \setminus \di_\sig X \hookrightarrow U$ is a homotopy equivalence.
\end{Thm}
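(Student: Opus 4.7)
The plan is to use the bicombing $\sig$, extended from points of $X$ to points at infinity via $\sig$-consistent rays, as a global ``convex combination'' structure on $\olX_\sig$. Fix a basepoint $x_0 \in X$ and, for each $p \in \olX_\sig$, a parametrized path $\gam_p \colon [0,1] \to \olX_\sig$ from $x_0$ to $p$: put $\gam_p(s) := \sig_{x_0 p}(s)$ when $p \in X$, and $\gam_\xi(s) := \rho_\xi(\varphi(s))$ with $\gam_\xi(1) := \xi$ when $p = \xi \in \di_\sig X$, where $\rho_\xi \colon [0,\infty) \to X$ is the unique $\sig$-consistent ray from $x_0$ representing $\xi$ and $\varphi \colon [0,1) \to [0,\infty)$ is any fixed homeomorphism. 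The first task is to verify, using the definition of the cone topology on $\olX_\sig$ and the convexity of $\sig$, that the evaluation $(p,s) \mapsto \gam_p(s)$ is continuous on $\olX_\sig \times [0,1]$.

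Once this joint continuity is in hand, the geometric and topological conclusions follow directly. The formula $H(p,t) := \gam_p(1-t)$ deforms $\id_{\olX_\sig}$ to the constant map $x_0$, proving contractibility. Local contractibility at a point $q \in \olX_\sig$ is obtained by repeating the construction based at an auxiliary point $q' \in X$ close to $q$ (an interior point on the representing ray of $q$ when $q \in \di_\sig X$), so that small cone neighborhoods of $q$ are preserved under the resulting deformation. Metrizability together with contractibility and local contractibility, in combination with the global convex-combination structure supplied by $\sig$, places $\olX_\sig$ in the scope of standard metric-AR criteria (a contractible metrizable ANR, or a direct Dugundji-type extension argument), yielding the absolute retract property.

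For the $Z$-set property, define $f_\eps(p) := \gam_p(1-\eps)$ for $\eps \in (0,1)$. Then $f_\eps$ maps $\olX_\sig$ continuously into $X$, and $K_\eps(p,t) := \gam_p(1 - t\eps)$ is a homotopy from $\id_{\olX_\sig}$ to $f_\eps$ whose tracks, measured in the cone topology, tend to $0$ uniformly as $\eps \to 0$. Given an open set $U \sub \olX_\sig$, I would pick $\eps$ small and, if necessary, damp $\eps$ to $0$ outside a suitable compact subset of $U$ (using the AR property to glue); this produces a homotopy within $U$ from $\id_U$ to a map $U \to U \sm \di_\sig X$, which combined with the inclusion $U \sm \di_\sig X \hookrightarrow U$ gives the asserted homotopy equivalence.

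The main obstacle is the joint continuity of $(p,s) \mapsto \gam_p(s)$ at points where $p \in \di_\sig X$: when $p_n \in X$ converges to $\xi \in \di_\sig X$ in the cone topology, one must argue that the finite segments $\sig_{x_0 p_n}$, after reparametrization by $\varphi$, converge uniformly on $[0,1]$ to $\gam_\xi$. Here consistency is essential, since it forces long initial portions of $\sig_{x_0 p_n}$ to coincide (up to cone-topology errors) with $\rho_\xi$, while convexity supplies the uniform control further along the geodesic needed to pass to the reparametrized limit on the whole of $[0,1]$.
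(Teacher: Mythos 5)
Your construction hinges on the joint continuity of $(p,s)\mapsto\gam_p(s)$, and this is not merely an unverified step: with the parametrization you chose, the map is genuinely discontinuous. For $p\in X$ you use the proportional parametrization $\gam_p(s)=\sig_{x_0p}(s)$, while for $\xi\in\di_\sig X$ you use an arclength reparametrization $\rho_\xi\circ\varphi$. Fix $\xi\in\di_\sig X$ and $s\in(0,1)$, and let $p_n:=\rho_\xi(n)\to\xi$. By consistency $\gam_{p_n}(s)=\rho_\xi(sn)$, whose distance from $x_0$ tends to infinity, so $\gam_{p_n}(s)\to\xi$ in the cone topology; but $\gam_\xi(s)=\rho_\xi(\varphi(s))$ is a fixed point of $X$. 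No choice of $\varphi$ can repair this, because the defect sits in the $X$-branch: the homotopy parameter must correspond to a common cut-off \emph{radius} measured from the basepoint, not to a common fraction of the way to the endpoint. This is precisely what the paper's generalized rays $\rho_{o\bar x}$ (eventually constant when $\bar x\in X$) and the radial retractions $\psi_\lam(\bar x)=\rho_{o\bar x}(-\log(1-\lam))$ accomplish, and with that normalization continuity follows from the $2$-Lipschitz estimate for $\phi_r$. Since your contraction $H$, your local contractions, and the maps $f_\eps$, $K_\eps$ in the $Z$-set argument are all built from $\gam$, each of these steps fails as written. Two further remarks: for local contractibility the paper does not rebase at a nearby point $q'$ (which would require a basepoint-change estimate to see that small neighborhoods are preserved), but shows directly that each cone neighborhood $U_o(\bar x,t,\eps)$ is contractible, since radial retraction down to radius $t$ preserves it by consistency and terminates in the $\sig$-convex set $U(\rho_{o\bar x}(t),\eps)\cap B(o,t)$; and your damping idea for the $Z$-set is, once the continuity issue is fixed, essentially the paper's, implemented there without any gluing via the Lipschitz function $\mu(\bar x)=D_o(o,\bar x)-\tfrac12\inf\{D_o(\bar x,\bar y):\bar y\in\olX_\sig\sm U\}$ and the homotopy $h(\bar x,\lam)=\psi_{\max\{1-\lam,\mu(\bar x)\}}(\bar x)$.

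The absolute retract step contains a second, independent gap. Contractible plus locally contractible plus metrizable does \emph{not} imply ANR (hence not AR) without a finite-dimensionality hypothesis, and $X$ is not assumed finite dimensional; the paper flags exactly this point. Nor can one run a Dugundji-type extension argument directly on $\olX_\sig$: the convex-combination structure furnished by $\sig$ does not extend to $\olX_\sig$, as two boundary points are not joined by anything. The paper's route is to note that $X$ itself is an AR (it is strictly equiconnected via $\sig$, so Himmelberg's theorem applies), that $X$ is homotopy dense in $\olX_\sig$ by means of the $\psi_\lam$, and that a metrizable space containing a homotopy dense ANR is an ANR; a contractible ANR is an AR. Some argument of this kind, passing through $X$ rather than working on $\olX_\sig$ directly, is indispensable.
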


Note that we do not assume $X$ to be proper or finite dimensional.
Bestvina and Mess~\cite{BesM} proved the analogous result for the Gromov 
closure $\overline{P(\Gam)}$ of the (contractible) Rips complex of a 
hyperbolic group $\Gam$. Theorem~\ref{Thm:intro-3} and 
Theorem~\ref{Thm:intro-4} together thus provide an alternative to their 
result, which has a number of important applications 
(see Corollaries~1.3 and~1.4 in~\cite{BesM}). 


\section{From conical to convex bicombings}

In this section, we first discuss some simple properties and examples
of conical geodesic bicombings, as defined in~\eqref{eq:conical}, 
then we prove Theorem~\ref{Thm:intro-1}.

\begin{Lem} \label{Lem:retr}
Let $\b X$ be a metric space with a conical geodesic bicombing
$\b\sig$. If $\pi \colon \b X \to X$ is a $1$-Lipschitz retraction
onto some subspace $X$ of $\b X$, then 
$\sig := \pi \circ \b\sig|_{X \times X \times [0,1]}$ 
defines a conical geodesic bicombing on $X$.
\end{Lem}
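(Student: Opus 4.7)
The plan is to verify the two defining properties of a conical geodesic bicombing for $\sig := \pi \circ \b\sig|_{X \times X \times [0,1]}$: (i) for each pair $(x,y) \in X \times X$, the map $\sig_{xy}$ is a constant speed geodesic in $X$ from $x$ to $y$; and (ii) the conical inequality \eqref{eq:conical} holds for $\sig$.

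For (i), the endpoint conditions are immediate: since $\pi|_X = \id_X$, we have $\sig_{xy}(0) = \pi(\b\sig_{xy}(0)) = \pi(x) = x$ and similarly $\sig_{xy}(1) = y$. To get the constant speed condition, I would first apply the $1$-Lipschitz property of $\pi$ together with the fact that $\b\sig_{xy}$ has constant speed $d(x,y)$ to obtain the upper bound $d(\sig_{xy}(t),\sig_{xy}(t')) \le |t-t'|\,d(x,y)$. The matching lower bound is then forced by the triangle inequality: for $0 \le s \le t \le 1$ the three upper bounds on $d(\sig_{xy}(0),\sig_{xy}(s))$, $d(\sig_{xy}(s),\sig_{xy}(t))$, $d(\sig_{xy}(t),\sig_{xy}(1))$ sum to exactly $d(x,y)$, so all three must be equalities. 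Thus $\sig_{xy}$ has constant speed $d(x,y)$, as required.

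For (ii), given $x,y,x',y' \in X$ and $t \in [0,1]$, the $1$-Lipschitz property of $\pi$ gives
\[
d(\sig_{xy}(t),\sig_{x'y'}(t))
 = d(\pi(\b\sig_{xy}(t)),\pi(\b\sig_{x'y'}(t)))
 \le d(\b\sig_{xy}(t),\b\sig_{x'y'}(t)),
\]
and the right-hand side is bounded by $(1-t)\,d(x,x') + t\,d(y,y')$ by the conicality of $\b\sig$.

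The only step that requires a moment's thought is deducing the constant speed from the mere $1$-Lipschitz character of $\pi$, which a priori only controls distances from above; the triangle inequality combined with the preservation of both endpoints is what upgrades this to equality. Everything else is a direct one-line consequence of the hypotheses.
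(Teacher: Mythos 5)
Your proposal is correct and follows essentially the same route as the paper: the paper's proof simply notes that $\pi$ being a $1$-Lipschitz retraction makes $\sig$ a geodesic bicombing and then derives conicality from the same two-step inequality you write in~(ii). Your elaboration of the constant-speed claim (upper bound from the Lipschitz property, lower bound forced by the triangle inequality since both endpoints are fixed) is exactly the standard argument the paper leaves implicit.
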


\begin{proof}
Note that since $\pi$ is a $1$-Lipschitz retraction, $\sig$ is indeed
a geodesic bicombing on $X$. Furthermore, since $\pi$ is
$1$-Lipschitz and $\b\sig$ is conical, we have
\[
d(\sig_{xy}(t),\sig_{x'y'}(t)) \le d(\b\sig_{xy}(t),\b\sig_{x'y'}(t))
\le (1-t)\,d(x,x') + t\,d(y,y')
\]
for all $t \in [0,1]$ and $x,y,x',y' \in X$, so $\sig$ is conical as
well.
\end{proof}

As mentioned earlier, a direct consequence of this lemma is that
all injective metric spaces (or absolute $1$-Lipschitz retracts) admit
conical geodesic bicombings. An equally simple application gives 
an example of a conical geodesic bicombing that is not convex.

\begin{Expl} \label{Expl:not-convex}
Consider the set $X$ of all $(u,v) \in \R^2$ with $|u|
\le 2$ and $b(u) := |u| - 1 \le v \le |b(u)|$, endowed with the metric
induced by the $\ell_\infty$ norm on $\R^2$. Note that this is a
geodesic metric space. With respect to this metric, 
the vertical retraction $\pi$ from the triangle
$\b X := \{(u,v) : b(u) \le v \le 1\}$ onto $X$ that maps
$(u,v)$ to $(u,\min\{v,|b(u)|\})$ is $1$-Lipschitz. The linear
geodesic bicombing $\b\sig$ on $\b X$, defined by $\b\sig_{xy}(t) :=
(1-t)x + ty$, is convex. By Lemma~\ref{Lem:retr}, $\sig := \pi \circ
\b\sig|_{X \times X \times [0,1]}$ defines a conical geodesic
bicombing on $X$. For $x = (-2,1)$ and $y = (2,1)$, we have
$\sig_{xy}(1/4) = (-1,0)$, $\sig_{xy}(3/4) = (1,0)$, and
$\sig_{xy}(1/2) = (0,1)$. Hence, for $z = (0,-1)$, the function $t
\mapsto \|\sig_{xy}(t) - \sig_{zz}(t)\|_\infty = \|\sig_{xy}(t) - z\|_\infty$ is
clearly not convex.
\end{Expl}

We now define a relaxed notion of convexity that will be useful 
for the proof of Theorem~\ref{Thm:intro-1}.
We say that a geodesic bicombing $\sig$ on a metric space $X$ is 
{\em $1/n$-discretely convex\/} if, for all $x,y,x',y' \in X$, 
the convexity condition
\[
d(\sig_{xy}(s),\sig_{x'y'}(s)) \le
\frac{t-s}{t-r} \,d(\sig_{xy}(r),\sig_{x'y'}(r)) 
+ \frac{s-r}{t-r} \,d(\sig_{xy}(t),\sig_{x'y'}(t))
\]
holds whenever the three numbers $r < s < t$
belong to $[0,1] \cap (1/n)\Z$. To check this condition it clearly
suffices to verify the ``local'' inequality
\begin{align*}
2d(\sig_{xy}(s),\sig_{x'y'}(s)) 
&\le d(\sig_{xy}(s-1/n),\sig_{x'y'}(s-1/n)) \\
&\quad + d(\sig_{xy}(s+1/n),\sig_{x'y'}(s+1/n))
\end{align*}
for all $s \in (0,1) \cap (1/n)\Z$. Note that every conical geodesic
bicombing is $1/2$-discretely convex.

\begin{Prop} \label{Prop:discr-convex}
Suppose that $X$ is a complete metric space with a geodesic bicombing
$\sig$ that is conical and $1/n$-discretely convex for some integer 
$n \ge 2$. Then $X$ also admits a geodesic bicombing that is conical and
$1/(2n-1)$-discretely convex.
\end{Prop}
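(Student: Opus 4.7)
My plan is to construct the refined bicombing $\tau$ by a cat's-cradle averaging based on $\sigma$, following the Aleksandrov--Berestovskii idea alluded to in the introduction. Set $N = 2n-1$. For each pair $(x,y) \in X \times X$ I would write $p_j = \sigma_{xy}(j/n)$ for $j = 0, \ldots, n$, and then define the values $q_k = \tau_{xy}(k/N)$ on the refined grid as $\sigma$-midpoints of suitably chosen pairs $(p_{a(k)}, p_{b(k)})$, arranged so that each $q_k$ lies at distance $k\,d(x,y)/N$ from $x$. Having fixed the $q_k$, I would extend $\tau_{xy}$ to $[0,1]$ by concatenating $\sigma$-segments $\sigma_{q_k q_{k+1}}$ and reparametrizing at constant speed. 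The completeness of $X$ is used to allow, if necessary, an iterative version of this one-step construction and to take a uniform limit.

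The verification should then split into two parts. First, that $\tau$ is a conical geodesic bicombing follows from applying the conical inequality for $\sigma$ to each of the $\sigma$-segments entering its definition and combining linearly. Second, the $1/N$-discrete convexity is checked by comparing the polygons $(q_0, \ldots, q_N)$ and $(q_0', \ldots, q_N')$ associated to $(x,y)$ and $(x',y')$ respectively. The two inputs are the $1/n$-discrete convexity of $\sigma$, which makes $j \mapsto d(p_j, p_j')$ convex on $\{0, 1, \ldots, n\}$, together with the conical property of $\sigma$ at the midpoint level,
\[
d(\sigma_{p_a p_b}(1/2),\, \sigma_{p_a' p_b'}(1/2)) \le \tfrac{1}{2}\bigl(d(p_a, p_a') + d(p_b, p_b')\bigr).
\]
Combining these inequalities for $a = a(k)$, $b = b(k)$ as $k$ ranges over the refined grid should yield the discrete convexity of $k \mapsto d(q_k, q_k')$ on $\{0, 1, \ldots, N\}$.

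The hard part will be the precise choice of the auxiliary indices $a(k), b(k)$, together with the iteration scheme needed to ensure simultaneously that (i) the $q_k$ are uniformly spaced along a genuine geodesic from $x$ to $y$, so that $\tau_{xy}$ really has constant speed, and (ii) the convex-combination arithmetic assembles the $1/n$ discrete inequalities into the sharper $1/N$ inequalities rather than into something coarser. A clean resolution is likely to involve setting $\sigma^{(0)} = \sigma$ and $\sigma^{(j+1)} = T\sigma^{(j)}$ for an averaging operator $T$ modeled on the one-step construction above, showing that the $\sigma^{(j)}$ form a uniformly Cauchy sequence on $X \times X \times [0,1]$ (here the completeness of $X$ is essential), and checking that conicity is preserved by $T$ while the refined $1/(2n-1)$-discrete convexity is acquired in the limit $\tau$. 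Isolating the correct one-step combinatorial identity linking $1/n$-discrete convexity and conicity to $1/(2n-1)$-discrete convexity is the crux of the argument.
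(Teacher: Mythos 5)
Your high-level plan (a cat's-cradle iteration, completeness used to pass to a limit) points in the right direction, but the concrete one-step construction you describe cannot work, and the part you defer --- ``the correct one-step combinatorial identity'' --- is exactly the missing content. First, the arithmetic of your midpoint scheme fails: if $p_j=\sigma_{xy}(j/n)$ and $q_k=\sigma_{p_a p_b}(1/2)$, then the triangle-inequality sandwich along the geodesic $\sigma_{xy}$ forces $d(x,q_k)=\tfrac{a+b}{2n}\,d(x,y)$ exactly; for this to equal $\tfrac{k}{2n-1}\,d(x,y)$ one would need $a+b=\tfrac{2nk}{2n-1}$, which is never an integer for $0<k<2n-1$ since $\gcd(2n,2n-1)=1$. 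So no choice of indices $a(k),b(k)$ places the $q_k$ on the refined grid, and $\tau_{xy}$ cannot have constant speed. Second, your verification of $1/(2n-1)$-discrete convexity only invokes the conical property to get \emph{upper} bounds $d(q_k,q_k')\le\tfrac12\bigl(d(p_{a(k)},p_{a(k)}')+d(p_{b(k)},p_{b(k)}')\bigr)$, whereas the inequality $2d(q_k,q_k')\le d(q_{k-1},q_{k-1}')+d(q_{k+1},q_{k+1}')$ also requires matching \emph{lower} bounds on the neighboring terms, which neither input supplies.

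The paper's resolution is different in both respects. For fixed $x,y$ it iterates $q_0:=\sigma_{xy}(n/m)$ with $m=2n-1$, then $p_i:=\sigma_{xq_{i-1}}(1-1/n)$ and $q_i:=\sigma_{p_iy}(1/n)$; the conical property makes this a contraction, so by completeness $p_i\to p$ and $q_i\to q$ with the two consistency relations $p=\sigma_{xq}(1-1/n)$ and $q=\sigma_{py}(1/n)$. The new geodesic $\tilde\sigma_{xy}$ on the grid $(1/m)\Z$ is then read off from just \emph{two} overlapping $\sigma$-geodesics, $\sigma_{xq}$ on $[0,n/m]$ and $\sigma_{py}$ on $[(n-1)/m,1]$, reparametrized by the factor $m/n$. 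With this set-up the $1/m$-discrete convexity of $\tilde\sigma$ is not assembled from one-sided estimates at all: it is literally an instance of the $1/n$-discrete convexity of $\sigma$ applied to the pairs $(\sigma_{xq},\sigma_{x'q'})$ and $(\sigma_{py},\sigma_{p'y'})$, because consecutive refined grid points correspond to consecutive $1/n$-grid points on these two geodesics. Completeness enters only through the convergence of the sequences $p_i,q_i$ for each fixed pair $(x,y)$, not through a limit of a sequence of bicombings $T^j\sigma$ as you propose (that global limit is taken only later, in the proof of Theorem~1.1, via Arzel\`a--Ascoli on a proper space).
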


\begin{proof}
Set $m := 2n-1$. To construct the desired
new bicombing $\tilde{\sig}$, fix $x$ and $y$ and define the sequences
$p_i$ and $q_i$ recursively as $q_0 := \sig_{xy}(n/m)$ and
\[
p_i := \sig_{xq_{i-1}}(1 - 1/n), \quad q_i := \sig_{p_iy}(1/n),
\quad \text{for $i \ge 1$.} 
\]
Since $\sigma$ is conical, we get the inequalities
\begin{align*}
d(p_i,p_{i+1}) &\le (1-1/n) \,d(q_{i-1},q_i), \\
d(q_i,q_{i+1}) &\le (1-1/n) \,d(p_i,p_{i+1}).
\end{align*}
It follows that $p_i$ and $q_i$ are Cauchy sequences, so $p_i \to p$
and $q_i \to q$. Then $\sig_{xq_i} \to \sig_{xq}$ and $\sig_{p_iy} \to \sig_{py}$ 
uniformly, again because $\sig$ is conical. 
Note that $p = \sig_{xq}(1 - 1/n)$ and $q = \sig_{py}(1/n)$;
we can thus define $\tilde{\sig}_{xy}(s)$ for $s \in [0,1] \cap (1/m)\Z$ 
so that 
\begin{equation} \label{eq:tilde-sig}
\tilde{\sig}_{xy}(s) =
\begin{cases}
\sigma_{xq}\bigl((m/n)s\bigr) & \text{if $s \le n/m$,} \\
\sigma_{py}\bigl((m/n)s -(n-1)/n\bigr) & \text{if $s \ge (n-1)/m$.}
\end{cases}
\end{equation}
To declare $\tilde{\sig}_{xy}$ on all of $[0,1]$, we connect any pair 
of consecutive points
$x' := \tilde{\sig}_{xy}(s)$ and $y' := \tilde{\sig}_{xy}(s+1/m)$, where 
$s \in [0,1) \cap (1/m)\Z$, by the geodesic $t \mapsto \sig_{x'y'}(m(t-s))$ for  
$t \in [s,s + 1/m]$. 

Now if $p',q'$ and $\tilde{\sig}_{x'y'}$ result from the same 
construction for two points $x'$ and~$y'$, we want to show that for
$s \in (0,1) \cap (1/m)\Z$ we have
\begin{align*}
2d(\tilde{\sig}_{xy}(s),\tilde{\sig}_{x'y'}(s)) 
&\le d(\tilde{\sig}_{xy}(s-1/m),\tilde{\sig}_{x'y'}(s-1/m)) \\
&\quad + d(\tilde{\sig}_{xy}(s+1/m),\tilde{\sig}_{x'y'}(s+1/m)).
\end{align*} 
In view of~\eqref{eq:tilde-sig} this corresponds to the inequality
\[
2d(\tau(t),\tau'(t)) 
\le d(\tau(t-1/n),\tau'(t-1/n)) 
+ d(\tau(t+1/n),\tau'(t+1/n))
\]
where $\tau = \sig_{xq}$, $\tau' = \sig_{x'q'}$, $t = (m/n)s$ if
$s \le (n-1)/m$ and $\tau = \sig_{py}$, $\tau' = \sig_{p'y'}$, 
$t = (m/n)s -(n-1)/n$ if $s \ge n/m$. However, these inequalities
hold since $\sig$ is $1/n$-discretely convex. This shows that 
$\tilde{\sig}$ is $1/m$-discretely convex. Now it follows easily from the 
construction that $\tilde{\sig}$ is also conical.
\end{proof}

From this result, we now obtain Theorem~\ref{Thm:intro-1}
by an application of the Arzel\`a--Ascoli theorem, which requires $X$ to be
proper. We do not know whether the implication 
$\text{(E)} \Rightarrow \text{(D)}$ holds in general 
without this assumption.

\begin{proof}[Proof of Theorem~\ref{Thm:intro-1}]
Starting from the given conical geodesic bicombing $\sig^1 := \sig$,
we construct, by means of Proposition~\ref{Prop:discr-convex}, 
a sequence of conical geodesic
bicombings $\sig^k$ on $X$ such that $\sig^k$ is $1/n_k$-discretely
convex, where $n_1 = 2$ and $n_{k+1} = 2n_k - 1$.
This collection of maps $\sig^k$ is equicontinuous on every bounded set,
and for every fixed $(x,y,t)$ in the separable domain $X \times
X \times [0,1]$, the sequence $\sig^k_{xy}(t)$ remains in a compact subset 
of $X$. One may thus extract a subsequence
$\sig^{k(l)}$ that converges uniformly on every compact
set to a map $\b\sig$, which is clearly a geodesic bicombing. Convexity
\[
d(\b\sig_{xy}(s),\b\sig_{x'y'}(s))
\le \frac{t-s}{t-r} \,d(\b\sig_{xy}(r),\b\sig_{x'y'}(r)) 
+ \frac{s-r}{t-r} \,d(\b\sig_{xy}(t),\b\sig_{x'y'}(t)),
\] 
where $r < s < t$, follows from the corresponding inequality
for $\sig^{k(l)}_{xy},\sig^{k(l)}_{x'y'}$ and $r_l < s_l < t_l$ by choosing these
numbers in $[0,1] \cap (1/n_{k(l)})\Z$ such that 
$r_l\to r$, $s_l \to s$, and $t_l \to t$.
\end{proof}

The following observation will be used 
in Example~\ref{Expl:multiple-convex}.

\begin{Rem} \label{Rem:geod-preserved}
Let $\sig$ be a concial geodesic bicombing on the proper metric space $X$,
and suppose that for some pair of points $x,y$ the consistency condition
$\sig_{x'y'}(\lam) = \sig_{xy}((1-\lam)s + \lam t)$ holds for all
$0 \le s \le t \le 1$ and $\lam \in [0,1]$, where 
$x' := \sig_{xy}(s)$ and $y' := \sig_{xy}(t)$.
Then it is easily seen that $\sig_{xy}$ as well as all 
positively oriented subsegments are unaltered by the 
procedure in the above proof. In other words, the resulting convex 
bicombing $\bar\sig$ satisfies $\bar\sig_{x'y'} = \sig_{x'y'}$ for all
$x',y'$ as above, in particular $\bar\sig_{xy} = \sig_{xy}$.
\end{Rem}


\section{Straight curves}

Whereas the preceding section dealt with the existence of convex bicombings, 
we now turn to the question of uniqueness. First we consider a
property every geodesic from a convex bicombing necessarily
shares. 

Let $X$ be a metric space. We call a curve $\gam \colon [a,b]
\to X$ {\em straight} (or a {\em straight segment}) 
if for every $z \in X$ the function $d_z \circ \gam$ is convex, where
$d_z = d(z,\cdot)$. In particular, for fixed 
$s,t \in [a,b]$, taking $z := \gam(s)$ one gets the inequality
\[
d(\gam(s),\gam((1-\lam)s + \lam t)) \le \lam \,d(\gam(s),\gam(t)) 
\quad \text{for all $\lam\in[0,1]$,}
\]
whereas for $z := \gam(t)$ one obtains
\[
d(\gam((1-\lam)s + \lam t),\gam(t)) \le (1-\lam) \,d(\gam(s),\gam(t))
\quad \text{for all $\lam\in[0,1]$.}
\]
By taking the sum of these two inequalities one sees
that straight curves are geodesics (of constant speed). The terminology 
is further justified by Theorem~\ref{Thm:straight-in-vs} below.
In the proof of this result as well as in Example~\ref{Expl:multiple-convex} 
we use Isbell's injective hull construction~\cite{Isb}, 
which we first recall (see~\cite{L} for a more detailed exposition).

Given a metric space $X$, consider the vector space $\R^X$ of arbitrary 
real valued functions on $X$ and put
\[
\Del(X) := \{ f \in \R^X : 
\text{$f(x) + f(y) \ge d(x,y)$ for all $x,y \in X$}\}.
\]
Call $f \in \Del(X)$ {\em extremal\/} if there is no $g \le f$ in $\Del(X)$ 
distinct from $f$. The set $\E(X)$ of extremal functions is equivalently 
described as
\[
\E(X) = \bigl\{ f \in \R^X : 
\text{$f(x) = \textstyle\sup_{y \in X}(d(x,y) - f(y))$ for all $x \in X$} 
\bigr\},
\]
and it is easily seen that $\E(X)$ is a subset of
\[
\Del_1(X) := \{f \in \Del(X): \text{$f$ is $1$-Lipschitz}\}.
\]
Note that a function $f \in \R^X$ belongs to $\Del_1(X)$ if and only if 
\begin{equation} \label{eq:f-d}
\|f - d_x\|_\infty 
= f(x) \quad \text{for all $x \in X$.}
\end{equation}
The metric $(f,g) \mapsto \|f - g\|_\infty$ on $\Del_1(X)$ is thus finite,
as $\|f - g\|_\infty \le \|f - d_x\|_\infty + \|g - d_x\|_\infty = f(x) + g(x)$ 
for any $x \in X$. The set $\E(X) \sub \Del_1(X)$ is equipped with the induced 
metric, and one has the canonical isometric embedding 
\[
\e \colon X \to \E(X), \quad \e(x) = d_x.
\]
Isbell showed that $(\e,\E(X))$ is indeed
an injective hull of $X$. That is, $\E(X)$ is an injective metric space, and 
every isometric embedding of $X$ into another injective metric space 
factors through $\e$. 

Returning to straight curves, we first observe that this property 
persists when we pass from $X$ to $\E(X)$.

\begin{Lem} \label{Lem:straightXtoEX}
Let $\gam \colon [a,b] \to X$ be a straight curve in some metric
space $X$. Regarding $X$ as a subspace of its injective hull $\E(X)$
we then have that $\gam$ is also straight in $\E(X)$.
\end{Lem}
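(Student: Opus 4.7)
The plan is to reduce straightness of $\gam$ in $\E(X)$ to the straightness hypothesis in $X$ via the extremal-function characterization of $\E(X)$. So I fix an arbitrary $f \in \E(X)$ and aim to show that the function $t \mapsto \|f - d_{\gam(t)}\|_\infty$ is convex on $[a,b]$; since $\e(\gam(t)) = d_{\gam(t)}$, this is precisely what straightness of $\gam$ in $\E(X)$ demands.

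The key observation is that, by the defining identity \eqref{eq:f-d}, we have $\|f - d_{\gam(t)}\|_\infty = f(\gam(t))$ for every $t$, so the question reduces to convexity of $t \mapsto f(\gam(t))$. Now I invoke the second (equivalent) description of extremal functions recalled just before the lemma,
\[
f(\gam(t)) = \sup_{y \in X}\bigl(d(\gam(t),y) - f(y)\bigr) = \sup_{y \in X}\bigl(d_y(\gam(t)) - f(y)\bigr).
\]
For each fixed $y \in X$, the function $t \mapsto d_y(\gam(t))$ is convex because $\gam$ is straight in $X$, and $-f(y)$ is merely a constant. Hence each function inside the supremum is convex in $t$, and the pointwise supremum of a family of convex functions is convex. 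This gives convexity of $t \mapsto f(\gam(t))$ and hence of $t \mapsto d_{\E(X)}(f,\e(\gam(t)))$, as desired.

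There is essentially no obstacle here beyond correctly invoking the right formula for extremal functions; the proof is a one-line consequence of the ``$\sup$'' description of $\E(X)$ together with the stability of convexity under suprema. No properness or finite-dimensionality hypothesis enters, and the argument works verbatim for any $f$ in $\Del_1(X)$ satisfying the extremal identity.
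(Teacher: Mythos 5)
Your proof is correct and is essentially the paper's argument in a more compact form: both reduce the claim via \eqref{eq:f-d} to the convexity of $t \mapsto f(\gam(t))$, and the paper's choice of a near-maximizer $v$ with $f(v)+f(w) \le d(v,w)+\eps$, followed by the straightness inequality for $d_v\circ\gam$ and the bounds $d(v,x)-f(v)\le f(x)$, $d(v,y)-f(v)\le f(y)$, is exactly the unwound proof that the supremum $\sup_{y}\bigl(d_y(\gam(\cdot))-f(y)\bigr)$ of convex functions is convex. No gap; your invocation of the ``$\sup$'' description of $\E(X)$ is a legitimate shortcut.
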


\begin{proof}
By~\eqref{eq:f-d}, the distance from an element $f\in \E(X)$ 
to a point $x \in X \sub \E(X)$ equals $f(x)$. So
we need to show that the function $f \circ \gam$ is convex. 
Given $x := \gam(s)$, $y := \gam(t)$, and $w := \gam((1-\lam)s + \lam t)$,
where $\lam \in [0,1]$, let $\eps > 0$ and choose (using the extremality
of $f$) a point $v \in X$ such that $f(v) + f(w) \le d(v,w) + \eps$. 
Since $\gam$ is straight in $X$,  
we have
\[
d(v,w) \le (1-\lam)\,d(v,x) + \lam\,d(v,y).
\]
Furthermore, $d(v,x) \le f(v) + f(x)$ and 
$d(v,y) \le f(v) + f(y)$. Combining these inequalities we get
\[
f(v) + f(w) 
\leq f(v) + (1-\lambda) f(x) + \lambda f(y) + \eps.
\]
Since $\eps$ was arbitrary, this gives $f(w) \le 
(1-\lambda) f(x) + \lambda f(y)$, as desired.
\end{proof}

Now let $X = V$ be a normed real vector space. Isbell~\cite{Isb2} and 
Rao~\cite{Rao} (see also~\cite{CiaD}) showed that then the injective 
hull $\E(V)$ has a Banach space structure with respect to which 
the isometric embedding $\e \colon V \to \E(V)$ is linear.
Since $\E(V)$ is injective, collections of balls in $\E(V)$ have 
the binary intersection property, so the Banach space $\E(V)$ is also 
injective in the linear category by~\cite{Nac}. Then a theorem of 
Nachbin, Goodner, and Kelley~\cite{Kel} implies that $\E(V)$ 
is isometrically isomorphic to the space $C(M)$ of continuous functions, 
with the supremum norm, on some extremally disconnected compact 
Hausdorff space $M$. Summarizing, we may thus view $V$ as a linear subspace 
of $C(M)$, where $M$ is such that $\E(V) \cong C(M)$. 
This fact will be used below.

As usual, we call a curve $\gam \colon [a,b] \to V$ in a vector space 
$V$ {\em linear} if it is of the form $t\mapsto p+ty$
for some $p,y \in V$. This is obviously a local property.

\begin{Prop} \label{Prop:cvcs}
Let $M$ be a compact Hausdorff space, and let 
$\gam \colon [a,b] \to O$ be a curve in an open subset $O$ of $C(M)$.
Then $\gam$ is straight (in $O$ with the induced metric) if and only if
it is linear.
\end{Prop}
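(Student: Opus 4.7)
The ``if'' direction is immediate: for a linear curve $\gamma(t)=p+ty$ and any $z\in O$, the difference $\gamma(t)-z$ is affine in $t$, so $t\mapsto\|\gamma(t)-z\|_\infty$ is convex by convexity of the norm.

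For the ``only if'' direction, the plan is to show, separately for each $m\in M$, that the scalar function $\phi_m(t):=\gamma(t)(m)$ is affine. Since linearity of $\gamma$ is a local property, it suffices to produce, for every $t_0\in(a,b)$, a $\delta>0$ such that $\phi_m$ is affine on $I:=[t_0-\delta,t_0+\delta]$ for every $m\in M$. I fix such a $t_0$, pick $r>0$ with $B(\gamma(t_0),r)\subset O$, and use continuity of $\gamma$ to make $\delta$ so small that $\|\gamma(t)-\gamma(t_0)\|_\infty<r/8$ on $I$.

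To handle a given $m\in M$, I introduce the test points $z_V^{\pm}:=\gamma(t_0)\mp Kg_V$ with $K:=r/2$, where $g_V\colon M\to[0,1]$ is a Urysohn function (available since the compact Hausdorff space $M$ is normal) with $g_V(m)=1$ and $g_V\equiv0$ outside a given open neighborhood $V$ of $m$. Then $\|z_V^{\pm}-\gamma(t_0)\|_\infty=K<r$, so both $z_V^{\pm}$ lie in $O$. Writing $u(t,m'):=\gamma(t)(m')-\gamma(t_0)(m')$, the bound $\|u(t,\cdot)\|_\infty<K/4$ on $I$ forces the supremum defining $\|\gamma(t)-z_V^{\pm}\|_\infty$ to be attained in $V$, and a direct estimate gives the sandwich
\[
K+\bigl(\phi_m(t)-\phi_m(t_0)\bigr)\le\|\gamma(t)-z_V^-\|_\infty\le K+\sup_{m'\in V}u(t,m'),
\]
together with the analogous bound $K-(\phi_m(t)-\phi_m(t_0))\le\|\gamma(t)-z_V^+\|_\infty\le K-\inf_{m'\in V}u(t,m')$. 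Applying the convexity of $t\mapsto\|\gamma(t)-z_V^-\|_\infty$ at any $s_1<t^*<s_2$ in $I$ with $t^*=\frac{1}{2}(s_1+s_2)$ and letting $V$ shrink to $\{m\}$, so that $\sup_V u(s_i,\cdot)\to u(s_i,m)$ and $\inf_V u(s_i,\cdot)\to u(s_i,m)$ by continuity of $\gamma(s_i)$ on $M$, yields $\phi_m(t^*)\le\frac{1}{2}(\phi_m(s_1)+\phi_m(s_2))$; the analogous argument with $z_V^+$ gives the reverse inequality. Midpoint equality, together with continuity of $\phi_m$, then forces $\phi_m$ to be affine on $I$.

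The main technical obstacle is the two-sided constraint on $K$: it has to be small enough for $z_V^{\pm}$ to remain inside the open set $O$ (where the straightness hypothesis applies), yet large relative to $\|\gamma(t)-\gamma(t_0)\|_\infty$ so that the supremum in $\|\gamma(t)-z_V^{\pm}\|_\infty$ is genuinely pinned near $m$ by the peak of $\pm Kg_V$ and is not spoiled by values of $u(t,\cdot)$ off $V$. Fixing $K=r/2$ and shrinking $\delta$ so that $\|u(t,\cdot)\|_\infty\ll K$ on $I$ resolves this tension.
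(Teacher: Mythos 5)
Your proof is correct and follows essentially the same route as the paper's: both arguments test straightness against $\gamma(t_0)$ shifted by $\pm$ a tall Urysohn bump peaked at $m$ and supported in a small neighborhood (so that the sup norm of $\gamma(t)$ minus the test point is pinned near $m$), and then shrink the neighborhood to extract the affine identity for $t \mapsto \gamma(t)(m)$. The only cosmetic differences are that the paper normalizes to a subsegment from $0$ to $y$ and applies convexity at the triple $(0,\lambda,1)$, whereas you keep the basepoint $\gamma(t_0)$ and use midpoint convexity on a small interval $I$.
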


\begin{proof}
Clearly every linear curve is straight. For the other direction,
we assume that $\gam \colon [0,1] \to O$ is a straight curve from $0$ to $y$,
where $\|y\|_\infty = l$, and the closed $2l$-neighborhood of $\gam([0,1])$
is contained in $O$. We have to show that the two functions
$\gam(\lam)$ and $\lam y$ agree for every $\lam \in (0,1)$.
So fix $\lambda \in (0,1)$ as well as $m \in M$.
For an arbitrary $\eps > 0$, choose an open neighborhood $U$ of $m$ such
that 
\[
|\gam(\lambda)(m') - \gam(\lambda)(m)| < \eps/2, \quad 
|y(m') - y(m)| < \eps/2
\]
for all $m' \in U$.
By Urysohn's lemma there is a nonnegative function $\phi \in C(M)$ with
$\phi|_{M\setminus U} \equiv 0$ and $\|\phi\|_\infty = 2l$. Put 
$z_{\pm} := \gam(\lam) \pm \phi$ and note that 
these are elements of $O$. Since $\gam$ is straight,
\begin{equation} \label{eq:zpm}
2l = \|z_\pm - \gam(\lam)\|_\infty 
\le (1-\lam)\|z_\pm\|_\infty + \lam \|z_\pm - y\|_\infty.
\end{equation}
Now if $f \in C(M)$ is a function satisfying $\|f\|_\infty \le l$ as well as
$f(m') < f(m) + \eps$ for all $m' \in U$, 
then $(f + \phi)|_U < 2l + f(m) + \eps$,
$(f + \phi)|_{M \sm U} \le l$, $-(f + \phi) \le -f \le l$, and 
$l \le 2l + f(m)$, hence
\[
\|f + \phi\|_\infty \le 2l + f(m) + \eps.
\]
Taking $f = \gam(\lam)$ and $f = \gam(\lam) - y$ we get 
$\|z_+\|_\infty \le 2l + \gam(\lam)(m) + \eps$ and 
$\|z_+ - y\|_\infty \le 2l + \gam(\lam)(m) - y(m) + \eps$, respectively.
(Note that $\|f\|_\infty \le l$ since $\gam$ is a geodesic from $0$ to $y$.)
Together with~\eqref{eq:zpm}, this shows that
\[
\gam(\lam)(m) - \lam y(m) + \eps \ge 0.
\]
Similarly, for $f = -\gam(\lam)$ and $f = y - \gam(\lam)$
we obtain $\|z_-\|_\infty \le 2l - \gam(\lam)(m) + \eps$ and 
$\|z_- - y\|_\infty \le 2l + y(m) - \gam(\lam)(m) + \eps$, respectively,
which gives
\[
\lam y(m) - \gam(\lam)(m) + \eps \ge 0.
\]
Letting $\eps \to 0$ we conclude that $\gam(\lam)(m) = \lam y(m)$,
ending the proof as $\lam$ and $m$ were arbitrary.
\end{proof}

Combining these results we obtain the desired characterization
of linear geodesics in normed spaces.

\begin{Thm} \label{Thm:straight-in-vs}
A straight curve in an arbitrary normed space $V$ is linear. Hence the 
bicombing of linear geodesics is the only convex bicombing on $V$.
\end{Thm}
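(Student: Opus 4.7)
The plan is to route the statement through the injective hull: the preceding results reduce straightness in a normed space to straightness in a concrete $C(M)$, where Proposition~\ref{Prop:cvcs} forces linearity.

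First I would let $\gam \colon [a,b] \to V$ be a straight curve. By Lemma~\ref{Lem:straightXtoEX}, viewing $V$ as a subspace of $\E(V)$ via the canonical isometric embedding $\e$, the curve $\gam$ is also straight in $\E(V)$. Next I would invoke the Isbell--Rao--Nachbin--Goodner--Kelley machinery summarized just before Proposition~\ref{Prop:cvcs}: $\E(V)$ carries a Banach space structure making $\e \colon V \to \E(V)$ linear, and $\E(V)$ is isometrically isomorphic to $C(M)$ for some extremally disconnected compact Hausdorff space $M$. Thus $V$ sits inside $C(M)$ as a linear subspace, and $\gam$ is a straight curve in $C(M)$.

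Now I would apply Proposition~\ref{Prop:cvcs} with the open set $O := C(M)$, which is trivially open in itself. The conclusion is that $\gam$ is linear in $C(M)$, say $\gam(t) = p + tv$ for some $p,v \in C(M)$. Since $\gam$ takes values in $V$ and $V$ is a linear subspace of $C(M)$, we have $p = \gam(0) \in V$ and $v = \gam(b) - \gam(0) \in V$ (rescaling $t$ if needed), so $\gam$ is linear in $V$ as well.

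For the uniqueness of the bicombing, I would argue as follows: let $\sig$ be any convex geodesic bicombing on $V$. For any fixed $z \in V$, applying~\eqref{eq:convex} to the pair $(\sig_{xy},\sig_{zz})$, where $\sig_{zz}$ is the constant curve at $z$, shows that $t \mapsto d(\sig_{xy}(t),z)$ is convex on $[0,1]$. Hence every $\sig_{xy}$ is a straight curve in $V$, and by the first part it is linear. The endpoint conditions $\sig_{xy}(0)=x$ and $\sig_{xy}(1)=y$ then force $\sig_{xy}(t) = (1-t)x + ty$, the linear bicombing.

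The only nontrivial point is the identification $\E(V) \cong C(M)$ with a linear isometric embedding $V \hookrightarrow C(M)$; this is a quoted external result, so no real obstacle remains — the proof is essentially an assembly of Lemma~\ref{Lem:straightXtoEX}, Proposition~\ref{Prop:cvcs}, and the quoted linearization of $\E(V)$.
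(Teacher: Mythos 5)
Your proposal is correct and follows essentially the same route as the paper: pass to $\E(V)\cong C(M)$ via the linear isometric embedding, use Lemma~\ref{Lem:straightXtoEX} to transfer straightness, and apply Proposition~\ref{Prop:cvcs} with $O = C(M)$. The uniqueness argument via straightness of the geodesics $\sig_{xy}$ (obtained by comparing with constant geodesics $\sig_{zz}$) is also exactly the observation the paper makes in its introduction.
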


\begin{proof}
As mentioned above, we can regard the normed space $V$ as being a linear
subspace of its injective hull $\E(V)$, and we have $\E(V) \cong C(M)$ for
some compact Hausdorff space $M$. Now straight curves in $V$ are straight in 
$C(M)$ by Lemma~\ref{Lem:straightXtoEX} and linear by 
Proposition~\ref{Prop:cvcs}.
\end{proof}

One may ask whether this holds more generally for linearly convex subsets
of normed spaces. The answer turns out to be negative.

\begin{Expl} \label{Expl:nonlinear-straight}
Consider the Banach space $\ell_\infty([0,1])$ of bounded real valued 
functions on $[0,1]$, equipped with the supremum norm. The subset
\[
C:= \left\{ f\in\ell_\infty([0,1]) : 
\text{$f(0)+f(1) = 1$, $f$ is convex, and $f\in\Delta_1([0,1])$} \right\}
\]
is compact and linearly convex. Hence there is the convex bicombing of linear 
geodesics in $C$. But there are more straight segments: the curve
$\gam \colon [0,1] \to C$, $\gam(t) = d_t$, is non-linear,
and by~\eqref{eq:f-d} we have $t \mapsto \|f - \gam(t)\|_\infty = f(t)$ 
for every $f \in C$, which is a convex function by the definition of $C$.
\end{Expl}

We conclude this section with an example of a compact metric space 
that admits at least two distinct convex geodesic bicombings.

\begin{Expl} \label{Expl:multiple-convex}
Consider two geodesics $\alpha,\beta \colon [0,1] \to \ell_\infty([0,1])$ from
$d_0$ to $d_1$: the linear geodesic $\alpha(s) = (1-s)d_0+sd_1$,
and the Kuratowski embedding $\beta(t) = d_t$ of $[0,1]$.
Let $B \sub \ell_\infty([0,1])$ be the bigon composed of these two 
geodesic segments. By~\eqref{eq:f-d} we have
\begin{equation} \label{eqn:st2st}
\| \alpha(s)-\beta(t) \|_\infty = \alpha(s)(t) = s+t-2st. 
\end{equation}
Since this last term is symmetric in $s$ and $t$, there is an isometric 
involution $\iota \colon B \to B$ that interchanges $\alpha$ and $\beta$.
Furthermore, as $\ell_\infty([0,1])$ is injective, we can embed
the injective hull $\E(B)$ of $B$ into $\ell_\infty([0,1])$ (and
identify it with its image), so that $B \subset \E(B) \subset
\ell_\infty([0,1])$. Now $\E(B)$ is not linearly convex in $\ell_\infty([0,1])$,
but by retracting the linear geodesics with endpoints in $\E(B)$
to $\E(B)$ we obtain a conical geodesic bicombing $\sig$ on $\E(B)$
(compare Lemma~\ref{Lem:retr}). Note that since $\alpha$ was already linear, 
we have $\sig_{d_0 d_1} = \alpha$. Note further that $\E(B)$ is compact, because
$B$ is compact. Theorem~\ref{Thm:intro-1} then yields a convex bicombing
$\bar\sig$ which, by Remark~\ref{Rem:geod-preserved}, still satisfies
$\bar\sig_{d_0 d_1} = \alpha$. Finally, the involution 
$\iota$ extends uniquely to an isometry $I$ of $\E(B)$ 
(see, for instance, Proposition~3.7 in~\cite{L}). Mapping 
$\bar\sig$ by $I$ we get a convex bicombing $\bar\tau$ on $\E(B)$ with
$\bar\tau_{d_0 d_1} = \beta$, distinct from $\bar\sig$.
\end{Expl}


\section{Combinatorial dimension}

The example just described contrasts with Theorem~\ref{Thm:intro-2}, which 
we will prove in this section. First we discuss the structure of injective 
hulls of finite metric spaces and the notion of combinatorial dimension 
in more detail.

Let $X$ be a finite metric space, with $|X| = n \ge 1$, say. 
The set $\Del(X) \sub \R^X \cong \R^n$ is an unbounded
polyhedral domain, determined by the finitely many linear inequalities 
$f(x) + f(y) \ge d(x,y)$ for $x,y \in X$ (in particular $f \ge 0$). 
As $X$ is finite, a function $f \in \Del(X)$ is extremal if and only 
if for every $x \in X$ there exists a point $y \in X$ such that 
$f(x) + f(y) = d(x,y)$. So the injective hull is a polyhedral subcomplex 
of $\partial \Del(X)$ of dimension at most $n/2$. (It is not difficult to see 
that $\E(X)$ consists precisely of the bounded faces of $\partial \Del(X)$.)
For $n \le 5$, the various possible combinatorial types of $\E(X)$ are 
depicted in Sect.~1 of~\cite{Dre} (where $\Del(X)$ and $\E(X)$ are denoted 
$P_X$ and $T_X$, respectively). 
To describe the structure of $\E(X)$ further, one may assign to 
every $f \in \E(X)$ the undirected graph with vertex set $X$ and edge set
\[
A(f) = \bigl\{ \{x,y\} : x,y \in X,\, f(x) + f(y) = d(x,y) \bigr\}.
\]
Note that this graph has no isolated vertices (because $f$ is extremal),
but may be disconnected, and there is a loop $\{x,x\}$ if and only if 
$f(x) = 0$, which occurs if and only if $f = d_x$ (by~\eqref{eq:f-d}). 
Call a set $A$ of unordered pairs of (possibly equal) points in $X$ 
{\em admissible} if there exists an $f \in \E(X)$ with $A(f) = A$,
and denote by $\cA(X)$ the collection of admissible sets.
The family of polyhedral faces of $\E(X)$ is then given by 
$\{P(A)\}_{A \in \cA(X)}$, where 
\[
P(A) = \{f \in \Del(X): A \sub A(f)\},
\]
and where $P(A')$ is a face of $P(A)$ if and only if $A \sub A'$.
We define the {\em rank\/} $\rk(A)$ of $A$ as the dimension of $P(A)$.
This number can be read off as follows. If $f,g$ are two elements of 
$P(A)$, then $f(x) + f(y) = d(x,y) = g(x) + g(y)$ for
$\{x,y\} \in A$, so $f(y) - g(y) = -(f(x) - g(x))$. Thus the difference 
$f-g$ has alternating sign along all edge paths in the graph $(X,A)$. 
It follows that there is either no or exactly one degree of freedom 
for the values of $f \in P(A)$ on every connected component of $(X,A)$, 
depending on whether or not the component contains a cycle of odd length. 
We call such components (viewed as subsets of $X$) {\em odd} or 
{\em even $A$-components}, respectively. 
The rank $\rk(A) = \dim(P(A))$ is then precisely the number of even 
$A$-components of~$X$. (Here we have adopted the notation 
from~\cite{L}, whereas~\cite{Dre} uses 
$\cK_f = \{(x,y) \in X \times X : f(x) + f(y) = d(x,y)\}$ in place 
of $A(f)$.)

Now let again $X$ be a general metric space.
We recall that the {\em combinatorial dimension\/} of $X$, 
introduced by Dress, is the possibly infinite number 
\[
\dim_{\comb}(X) = \sup\{ \dim(\E(Y)): Y \sub X,\, |Y| < \infty \}, 
\]
see Theorem~$9'$ on p.~380 in~\cite{Dre}. This theorem contains in particular
a characterization of the inequality 
$\dim_{\comb}(X) \le n$ in terms of a $2(n+1)$-point inequality, 
which may be rephrased as follows.

\begin{Thm}[Dress] \label{Thm:dress}
Let $X$ be a metric space, and let $n \ge 1$ be an integer.
The inequality $\dim_{\comb}(X) \le n$ holds if and only if 
for every set $Z \sub X$ with $|Z| = 2(n+1)$ and every fixed point free 
involution $i \colon Z \to Z$ there exists a fixed point free bijection
$j \colon Z \to Z$ distinct from $i$ such that
\begin{equation} \label{eq:dress}
\sum_{z \in Z} d(z,i(z)) \le \sum_{z \in Z} d(z,j(z)).
\end{equation}
\end{Thm}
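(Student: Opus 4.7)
The plan is to prove both directions of the equivalence by contrapositive, using the description from the excerpt: the polyhedral structure of $\E(Y)$ endows each face $P(A)$ with dimension equal to the number of even $A$-components, so $\dim_{\comb}(X) > n$ is equivalent to the existence of a finite $Y \sub X$ and an extremal $f \in \E(Y)$ whose graph $A(f)$ has at least $n+1$ even components.

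For the direction ``condition implies $\dim_{\comb}(X) \le n$'', I would assume $\dim_{\comb}(X) > n$ and construct a violating pair $(Z,i)$. Pick such an $f$ and $n+1$ even components of $A(f)$. Each one has no loop (a loop would be an odd cycle of length one) and no isolated vertex, so contains an ordinary edge $\{x_k,y_k\}$. The $2(n+1)$ endpoints are pairwise distinct since the components are disjoint; set $Z := \{x_0,y_0,\ldots,x_n,y_n\}$ and let $i$ be the involution swapping $x_k$ with $y_k$. Extremality at each chosen edge gives $\sum_{z\in Z} d(z,i(z)) = 2\sum_{z\in Z} f(z)$, while $d(z,j(z)) \le f(z) + f(j(z))$ yields $\sum_z d(z,j(z)) \le 2\sum_z f(z)$ for any fixed-point-free bijection $j$ of $Z$, with equality iff every pair $\{z,j(z)\}$ lies in $A(f)$. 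Since distinct components of $A(f)$ share no vertices, the only pairs in $A(f)$ with both endpoints in $Z$ are the chosen $n+1$ edges; hence only $j=i$ achieves equality, making $i$ a strict maximum and violating the condition.

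For the harder direction, assume the condition fails, producing $Z \sub X$ of size $2(n+1)$ and a fixed-point-free involution $i$ on $Z$ with $\sum d(z,i(z)) > \sum d(z,j(z))$ for every fpf bijection $j \ne i$. First I would extend this to a weak maximum of $\sum d(z,\sigma(z))$ over \emph{all} bijections $\sigma$ of $Z$: given any fixed point $z_0$ of such a $\sigma$, insert it into a nontrivial cycle by replacing a transition $w \to w'$ with $w \to z_0 \to w'$; the triangle inequality ensures the sum does not decrease, and iterating eliminates all fixed points. By Birkhoff--von Neumann, the primal assignment LP $\max \sum_{z,w} d(z,w)\pi(z,w)$ on the Birkhoff polytope (with $d(z,z):=0$) then has value $\sum d(z,i(z))$, and its dual minimizes $\sum_z(\alpha(z)+\beta(z))$ subject to $\alpha(z)+\beta(w) \ge d(z,w)$ for all $z,w \in Z$. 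Both the dual objective and constraints are invariant under the swap $(\alpha,\beta) \leftrightarrow (\beta,\alpha)$, so averaging a dual optimum with its reflection yields a dual optimum of the symmetric form $(f,f)$ with $f := \tfrac{1}{2}(\alpha+\beta)$. Then $f(z)+f(w) \ge d(z,w)$ for all $z,w \in Z$, including $z=w$ (which gives $f \ge 0$), so $f \in \Del(Z)$, and complementary slackness at the primal optimum $\pi_i$ forces $f(z)+f(i(z)) = d(z,i(z))$ for every $z$. Hence $f$ is extremal, the matching $M_i := \{\{z,i(z)\} : z \in Z\}$ is contained in $A(f)$, and the face $P(M_i) \sub \E(Z)$ has dimension equal to its number of even components, namely $n+1$ (each component being a single edge). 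Consequently $\dim \E(Z) \ge n+1$, so $\dim_{\comb}(X) \ge n+1 > n$, as required.

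The main obstacle is the second direction: to construct $f \in \E(Z)$ tight on the matching $M_i$ from the bare strict-max condition on $i$, the naive guess $f(z) = \tfrac{1}{2}d(z,i(z))$ need not satisfy $f(z)+f(w) \ge d(z,w)$ when $w \notin \{z,i(z)\}$. LP duality (or an equivalent argument) appears necessary to produce a symmetric dual optimum whose average automatically lies in $\Del(Z)$ and is tight on $M_i$.
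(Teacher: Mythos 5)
Your first direction is correct (it is the contrapositive of the paper's argument), and your LP construction of $f$ in the second direction is sound as far as it goes: it is a tidier packaging of what the paper does by hand with the functionals $\mu$ and $\nu$, and it does produce an extremal $f\in\E(Z)$ with $M_i\subseteq A(f)$. The gap is your last step. Ordinary complementary slackness gives only the inclusion $M_i\subseteq A(f)$, not $A(f)=M_i$, and the formula $\dim P(A)=\#\{\text{even $A$-components}\}$ holds only for \emph{admissible} $A$, i.e.\ when $A=A(g)$ exactly for some $g\in\E(Z)$; for a general $A$ it is merely an upper bound, since further constraints may be tight on all of $P(A)$. Concretely, take $Z=\{a,b,c,d\}$ with $d(a,b)+d(c,d)=d(a,d)+d(b,c)>d(a,c)+d(b,d)$ (four points on a line realize this) and $i\colon a\leftrightarrow b$, $c\leftrightarrow d$. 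Then $P(M_i)\ne\emptyset$, but adding the constraints $f(a)+f(d)\ge d(a,d)$ and $f(b)+f(c)\ge d(b,c)$ to the two defining equalities forces both to be equalities as well, so every $f\in P(M_i)$ has $A(f)$ containing a $4$-cycle and $\dim P(M_i)\le 1<2$. Here the Dress condition happens to hold for $(Z,i)$, so this does not contradict the theorem, but it shows that the inference ``$P(M_i)\ne\emptyset$, hence $\dim P(M_i)=n+1$'' is invalid as written: the strict-maximality of $i$ must be used a second time at exactly this point, and your proposal never does so.

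There are two ways to close the gap. The paper's way: choose $f$ with $Z_i\subseteq A(f)$ of \emph{maximal rank}; if that rank were still $\le n$, some even $A(f)$-component would contain two edges of $Z_i$, and an alternating cycle in $A(f)$ yields a fixed point free involution $j\ne i$ achieving equality in \eqref{eq:dress}, contradicting the assumed failure of the condition. Within your LP framework, the natural repair is strict complementary slackness: first upgrade your insertion argument to show that every permutation with a fixed point is \emph{strictly} suboptimal (inserting a fixed point into a cycle of length $\ge 2$ produces a fixed point free permutation containing a cycle of length $\ge 3$, hence distinct from $i$ and strictly suboptimal by hypothesis), so the primal optimal face is exactly $\{\pi_i\}$; a strictly complementary dual optimum is then slack on every ordered pair outside the graph of $i$, this slackness survives the symmetrization $f=\tfrac12(\alpha+\beta)$, and one obtains $A(f)=M_i$, whence $M_i$ is admissible and $\dim P(M_i)=n+1$. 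Without one of these supplements the second direction is incomplete.
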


The case $n = 1$ corresponds to the much simpler
fact that $\dim_\comb(X) \le 1$ if and only if $X$ is {\em $0$-hyperbolic\/} 
in the sense of Gromov~\cite{Gro} or {\em tree-like\/} in the terminology 
of~\cite{Dre}, that is, for every quadruple of points $x,x',y,y' \in X$,
\[
d(x,x') + d(y,y') \le \max\{d(x,y)+d(x',y'),d(x,y') + d(x',y)\}.
\]
Theorem~\ref{Thm:dress} follows from more general considerations 
in Sect.~(5.3) of~\cite{Dre}. For convenience, we provide a streamlined and 
somewhat simplified argument in an appendix. However, this result will not 
be used in the present paper.

By the results of~\cite{L}, every proper metric space with integer 
valued metric that is discretely geodesic and $\del$-hyperbolic
has finite combinatorial dimension.
By contrast, the unit circle $S$ in the Euclidean 
plane with either the induced (chordal) or the induced inner metric satisfies
$\dim_{\comb}(S) = \infty$, as is seen by looking at the constant extremal 
function $f = \diam(S)/2$, restricted to the vertices of a regular $2n$-gon. 
Similarly, the metric bigon $B$ constructed in 
Example~\ref{Expl:multiple-convex} has $\dim_\comb(B) = \infty$: 
consider the function $f$ defined by 
$f(\alpha(s)) = f(\beta(1-s)) = \|\alpha(s) - \beta(1-s)\|_\infty/2$ for 
$s \in [0,1]$. 
Among the finite dimensional normed 
spaces, only those with a polyhedral norm have finite combinatorial 
dimension, equal to the number of pairs of opposite
facets of the unit ball. 

The following proposition is the key observation for the proof of 
Theorem~\ref{Thm:intro-2}. We denote by $B(x,r)$ the closed ball of 
radius $r$ at $x$.

\begin{Prop} \label{Prop:key-ineq}
Let $X$ be a metric space of finite combinatorial dimension. Then for
every pair of points $x_0,y_0 \in X$ there exists a $\del > 0$ such
that
\[
d(x_0,y_0) + d(x,y) \le d(x,y_0) + d(x_0,y)
\] 
for all pairs of points $x \in B(x_0,\del)$ and $y \in B(y_0,\del)$.
\end{Prop}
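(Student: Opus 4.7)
The plan is to argue by contradiction, producing finite subsets $Y_N \sub X$ whose injective hulls have dimension at least $N+1$. Suppose the conclusion fails at $(x_0,y_0)$: then for every integer $k \ge 1$ there exist $x_k \in B(x_0, 1/k)$ and $y_k \in B(y_0, 1/k)$ with
\[
\eps_k := d(x_0,y_0) + d(x_k,y_k) - d(x_k,y_0) - d(x_0,y_k) > 0.
\]
Writing $\alpha_k := d(x_k,x_0)$ and $\beta_k := d(y_k,y_0)$, I pass to a subsequence so that $\alpha_k + \beta_k < \eps_{k'}/8$ whenever $k > k'$; this is possible because at each step the next pair may be chosen with arbitrarily small scale.

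Given $N \ge 1$, put $Y_N := \{x_0,y_0,x_1,y_1,\ldots,x_N,y_N\}$ and $a := d(x_0,y_0)$, and define $f \colon Y_N \to \R$ by
\[
f(x_0) = f(y_0) = a/2, \quad f(x_k) = d(x_k,y_0) - a/2 + \eps_k/2, \quad f(y_k) = d(x_0,y_k) - a/2 + \eps_k/2
\]
for $k = 1, \ldots, N$. The equalities $f(x_0)+f(y_0)=a$ and $f(x_k)+f(y_k)=d(x_k,y_k)$ hold by construction. All inequalities $f(u)+f(v) \ge d(u,v)$ with $u,v$ on a single ``side'' (both among the $x_k$'s, both among the $y_k$'s, or one being $x_0$ or $y_0$) are satisfied strictly with comfortable margin. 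For the remaining cross constraints $f(x_i)+f(y_j) \ge d(x_i,y_j)$ and $f(y_i)+f(x_j) \ge d(y_i,x_j)$ with $i \ne j$, a short triangle-inequality computation shows that the defect is bounded below by
\[
(\eps_i + \eps_j)/2 - 2\min\{\alpha_i+\beta_i,\, \alpha_j+\beta_j\},
\]
which is strictly positive by the decay condition. Hence $f \in \Del(Y_N)$, with equality holding precisely on the $N+1$ pairwise disjoint edges $\{x_0,y_0\}, \{x_1,y_1\}, \ldots, \{x_N,y_N\}$; since every vertex lies in one of these edges, $f$ is extremal.

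Each of the $N+1$ edges in $A(f)$ is a two-vertex even component of the graph $(Y_N,A(f))$, so the combinatorial description of faces of $\E(Y_N)$ recalled just before the proposition gives $\rk A(f) = N+1$ and hence $\dim \E(Y_N) \ge N+1$. Since $Y_N$ is bounded in $X$ and $N$ was arbitrary, this contradicts the hypothesis that $X$ (or every bounded subset thereof) has finite combinatorial dimension. The main obstacle is the bookkeeping for the cross constraints, which is exactly where the iterative control $\alpha_k + \beta_k \ll \eps_{k'}$ is used.
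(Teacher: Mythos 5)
Your argument is correct and reaches the conclusion by a genuinely different route from the paper's. The paper argues directly: among all finitely supported extremal functions $f$ with $\{x_0,y_0\} \in A(f)$ it picks one of maximal rank $n$ (this is where finite combinatorial dimension enters), arranges $A(f)$ to consist of $n$ disjoint tight edges, reads off $\del$ from the slack in the remaining constraints, and shows that a failing pair $(x,y)$ in the $\del$-balls would allow one to adjoin $\{x,y\}$ as an $(n+1)$-st disjoint tight edge, contradicting maximality. You instead run the contradiction at all scales at once: the failing pairs $(x_k,y_k)$ at nested scales are themselves assembled into a single $(2N+2)$-point set $Y_N$ carrying an extremal function whose tight graph is $N+1$ disjoint edges, whence $\dim \E(Y_N) \ge N+1$ for every $N$. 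Both proofs rest on the same combinatorial fact (disjoint tight edges are even components, each contributing one to the rank), but your high-rank witness lives entirely near $\{x_0,y_0\}$ and needs no auxiliary maximal configuration, at the price of the nested-scale bookkeeping; I checked that the cross-constraint defect is indeed bounded below by $(\eps_i+\eps_j)/2 - 2\min\{\alpha_i,\beta_j\}$ (even a bit better than your stated bound), so the condition $\alpha_k+\beta_k < \eps_{k'}/8$ for $k>k'$ makes all non-designated constraints strict. One small point to patch: you must also ensure the points of $Y_N$ are pairwise distinct, since otherwise $f$ may be ill-defined (e.g.\ if $x_i = x_j$ but $\eps_i \ne \eps_j$). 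This is easy: $\eps_k>0$ already forces $x_k \ne x_0$, $y_k \ne y_0$ and $x_k \ne y_k$, smallness of the scales separates the $x$-side from the $y$-side, and to get $x_i \ne x_j$, $y_i \ne y_j$ it suffices to choose the $m$-th scale smaller than $\min_{m'<m} d(x_{k_{m'}},x_0)$ and $\min_{m'<m} d(y_{k_{m'}},y_0)$ as well.
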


\begin{proof}
When $x_0 = y_0$, the triangle inequality in $X$ gives the result.
Now assume that $x_0 \ne y_0$. Denote by $\cF$ the collection of all
real valued functions with finite support $\spt(f) \sub X$ such that
$f \in \E(\spt(f))$, $x_0,y_0 \in \spt(f)$, and $\{x_0,y_0\} \in A(f)$,
that is,
\[
f(x_0) + f(y_0) = d(x_0,y_0).
\] 
Since $\dim_\comb(X) < \infty$, there exist an integer $n$ 
and a function $f \in \cF$ such that $\rk(A(g)) \le n = \rk(A(f))$ for 
all $g \in \cF$.
Since $x_0 \ne y_0$, we have $n \ge 1$, thus $f > 0$.
By restricting $f$ to a smaller set if necessary, we can arrange that
$A(f)$ is the collection of $n$ disjoint pairs 
$\{x_0,y_0\},\{x_1,y_1\},\dots,\{x_{n-1},y_{n-1}\}$.
There exists a $\del > 0$ such that for all $v \in \{x_0,y_0\}$ and
$w \in \{x_1,y_1,\dots,x_{n-1},y_{n-1}\}$ we have $d(v,w) > \del$, 
$f(v) \ge \del$, and
\begin{equation} \label{eq:fvfw}
f(v) + f(w) \ge d(v,w) + 2\del.
\end{equation}
Note that $d(x_0,y_0) = f(x_0) + f(y_0) \ge 2\del$. Let $x \in
B(x_0,\del)$ and $y \in B(y_0,\del)$. If $x = x_0$ or $y = y_0$ or $x
= y$, the desired inequality holds. So assume that $x_0 \ne x \ne y
\ne y_0$. Then $x,y,x_0,y_0,\dots,x_{n-1},y_{n-1}$ are pairwise
distinct. Put $a := d(x,y_0) - f(y_0)$ and $b := d(x_0,y) -
f(x_0)$. We have
\begin{align} \label{eq:alpha-beta}
\begin{split}
a &\ge d(x_0,y_0) - d(x,x_0) - f(y_0) = f(x_0) - d(x,x_0) 
\ge f(x_0) - \del, \\ 
b &\ge d(x_0,y_0) - d(y,y_0) - f(x_0) = f(y_0) - d(y,y_0) 
\ge f(y_0) - \del.
\end{split}
\end{align} 
Since $f(x_0),f(y_0) \ge \del$, this gives in particular 
$a,b \ge 0$ and hence
\begin{align} \label{eq:fv}
\begin{split}
a + f(x_0) &\ge \del \ge d(x,x_0), \\
b + f(y_0) &\ge \del \ge d(y,y_0).
\end{split}
\end{align}
Furthermore, for every $w \in \{x_1,y_1,\dots,x_{n-1},y_{n-1}\}$,
combining~\eqref{eq:alpha-beta} and~\eqref{eq:fvfw} we obtain
\begin{align} \label{eq:fw}
\begin{split}
a + f(w) &\ge f(x_0) + f(w) - \del \ge d(x_0,w) + \del \ge d(x,w), \\ 
b + f(w) &\ge f(y_0) + f(w) - \del \ge d(y_0,w) + \del \ge d(y,w).
\end{split}
\end{align}
Now, in the case that $a + b < d(x,y)$, we could define a function 
$g$ with finite support by putting $g(w) := f(w)$ for 
$w \in \{x_0,y_0,\dots,x_{n-1},y_{n-1}\}$ and by choosing $g(x) > a$
and $g(y) > b$ such that $g(x) + g(y) = d(x,y)$. In view 
of~\eqref{eq:fv} and~\eqref{eq:fw}, this function would satisfy
\[
A(g) = \bigl\{\{x_0,y_0\},\dots,\{x_{n-1},y_{n-1}\},\{x,y\}\bigl\},
\]
so that $g \in \cF$ and $\rk(A(g)) = n+1$, in contradiction to the 
maximality of $n$. We conclude that $d(x,y) \le a + b = d(x,y_0) +
d(x_0,y) - d(x_0,y_0)$.
\end{proof}

Resuming the discussion of straight curves,
we can now prove the following.

\begin{Prop} \label{Prop:unique-straight}
Suppose that $X$ is a metric space of finite combinatorial dimension,
and $\alpha,\beta \colon [0,1] \to X$ are two straight curves. Then the
function $s \mapsto d(\alpha(s),\beta(s))$ is convex on $[0,1]$. In
particular, every pair of points in $X$ is joined by at most one straight
segment, up to reparametrization.
\end{Prop}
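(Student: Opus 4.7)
The plan is to exploit the local four-point inequality of Proposition~\ref{Prop:key-ineq} together with the convexity of $\psi(s,t) := d(\alpha(s), \beta(t))$ in each variable separately (which is precisely what straightness of $\alpha$ and $\beta$ asserts, upon taking $z = \beta(t)$ and $z = \alpha(s)$ respectively). Write $\varphi(s) := \psi(s,s)$. Since $\alpha$ and $\beta$ are constant-speed geodesics and hence Lipschitz, $\varphi$ is continuous on $[0,1]$, so it suffices to prove convexity of $\varphi$ on a neighborhood of every interior point and then pass to the whole interval.

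Fix $s_0 \in (0,1)$, and let $\delta > 0$ be the value provided by Proposition~\ref{Prop:key-ineq} at the pair $(\alpha(s_0), \beta(s_0))$. By continuity of the two curves, pick $\eta > 0$ such that $\alpha(s), \beta(s)$ lie within the respective $\delta$-balls whenever $|s - s_0| < \eta$. For arbitrary $s_1, s_2 \in (s_0 - \eta, s_0 + \eta)$ and $\lam \in [0,1]$ with $s_0 = (1-\lam)s_1 + \lam s_2$, the crucial step is to apply the four-point inequality not with the ``parallel'' pair $(\alpha(s_1), \beta(s_1))$ but with the ``crossed'' choice $x = \alpha(s_1)$, $y = \beta(s_2)$, which gives
\[
\varphi(s_0) + \psi(s_1, s_2) \le \psi(s_1, s_0) + \psi(s_0, s_2).
\]
Convexity of $t \mapsto \psi(t, s_2)$ at the value $t = s_0$ (straightness of $\alpha$ with $z = \beta(s_2)$) yields $\psi(s_0, s_2) \le (1-\lam)\psi(s_1, s_2) + \lam \varphi(s_2)$, and symmetrically $\psi(s_1, s_0) \le (1-\lam)\varphi(s_1) + \lam \psi(s_1, s_2)$. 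Summing these two bounds, substituting into the displayed inequality, and cancelling the common term $\psi(s_1, s_2)$ on both sides leaves exactly
\[
\varphi(s_0) \le (1-\lam)\varphi(s_1) + \lam \varphi(s_2).
\]

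This establishes convexity of $\varphi$ in a neighborhood of every interior point. Combined with continuity this gives convexity on the whole of $[0,1]$ by a standard real-analysis argument: if the difference $h$ between $\varphi$ and the affine interpolant through $(s_1, \varphi(s_1)), (s_2, \varphi(s_2))$ attained a positive maximum at some interior point, local convexity would force $h$ to be locally constant there, and the maximal subinterval on which $h$ equals its maximum would be both open and closed in $(s_1, s_2)$, contradicting $h(s_1) = h(s_2) = 0$. The uniqueness statement then follows immediately: two straight segments joining the same pair of points, reparametrized with constant speed on $[0,1]$, produce a nonnegative convex $\varphi$ with $\varphi(0) = \varphi(1) = 0$, whence $\varphi \equiv 0$ and the segments coincide. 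The main obstacle is simply recognizing the correct ``crossed'' application of Proposition~\ref{Prop:key-ineq}; once that is in hand, the separate convexities in each variable of $\psi$ align perfectly with the mixed terms $\psi(s_1, s_0)$ and $\psi(s_0, s_2)$ and the bookkeeping makes $\psi(s_1, s_2)$ cancel.
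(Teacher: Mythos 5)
Your argument is correct and is essentially identical to the paper's proof: the same application of Proposition~\ref{Prop:key-ineq} to the pair $(\alpha(s_0),\beta(s_0))$ with the ``crossed'' points $\alpha(s_1),\beta(s_2)$, the same use of separate convexity of $d(\alpha(\cdot),\beta(t))$ and $d(\alpha(s),\beta(\cdot))$ to bound the mixed terms, and the same cancellation of $d(\alpha(s_1),\beta(s_2))$. The only difference is that you spell out the local-to-global convexity step and the uniqueness conclusion, which the paper leaves as routine.
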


\begin{proof}
For $s,t \in [0,1]$, put $h(s,t) := d(\alpha(s),\beta(t))$. Fix $s_0 \in
(0,1)$. Then it follows from Proposition~\ref{Prop:key-ineq} that there
exists an $\eps > 0$ such that $[s_0 - \eps,s_0 + \eps] \sub [0,1]$
and, for all $s,t \in [s_0 - \eps,s_0 + \eps]$,
\begin{equation} \label{eq:h}
h(s_0,s_0) + h(s,t) \le h(s,s_0) + h(s_0,t).
\end{equation}
Now suppose that $s_0 - \eps \le s < s_0 < t \le s_0 + \eps$, and let
$\lam \in (0,1)$ be such that $s_0 = (1-\lam) s + \lam t$. Since
$h(s,\cdot)$ and $h(\cdot,t)$ are convex functions on $[0,1]$, we have
\begin{align} \label{eq:hh}
\begin{split}
h(s,s_0) &\le (1-\lam)\,h(s,s) + \lam\,h(s,t),\\ 
h(s_0,t) &\le (1-\lam)\,h(s,t) + \lam\,h(t,t).
\end{split}
\end{align}
Combining~\eqref{eq:h} and~\eqref{eq:hh} we conclude that
\[
h(s_0,s_0) \le (1-\lam)\,h(s,s) + \lam\,h(t,t).
\]
Note that this holds whenever $s_0 - \eps \le s < s_0 < t \le s_0 +
\eps$ and $s_0 = (1-\lam) s + \lam t$, where $\eps > 0$ depends on
$s_0$. Since $s \mapsto h(s,s) = d(\alpha(s),\beta(s))$ is
continuous on $[0,1]$, it follows easily that this function is convex.
\end{proof}

Theorem~\ref{Thm:intro-2} is an immediate corollary
of Proposition~\ref{Prop:unique-straight}. Note that a convex 
(or conical) geodesic bicombing $\sig$ can be restricted 
to any ball $B = B(z,r)$ because $B$ is {\em $\sig$-convex}:
$x,y \in B$ implies $\sig_{xy}([0,1]) \subset B$.


\section{Boundary at infinity}

We now consider a complete metric space $X$ with a convex and consistent 
(equivalently, conical and consistent) geodesic bicombing.
Note that completeness is no restriction, as any conical bicombing may 
be extended to the completion of the underlying space. 
We define the geometric boundary and the 
closure of~$X$ by means of geodesic rays that are consistent with 
the given bicombing, and we equip the closure with a simple explicit metric. 
(Some general references for the analogous constructions in the case 
of $\CAT(0)$ spaces or Gromov hyperbolic spaces are~\cite{Bal,BriH,BuyS,Gro}.) 
Then we prove Theorem~\ref{Thm:intro-4}.

By a {\em ray\/} in $X$ we mean an isometric embedding of $\R_+ := [0,\infty)$.
Two rays $\xi,\eta$ in $X$ are {\em asymptotic\/} if the function
$t \mapsto d(\xi(t),\eta(t))$ is bounded or, equivalently, the Hausdorff 
distance between the images of $\xi$ and $\eta$ is finite. In the presence 
of a convex and consistent bicombing $\sig$ on $X$ we call a ray 
$\xi \colon \R_+ \to X$ a {\em $\sig$-ray\/} if
\[
\xi((1-\lam)s + \lam t) = \sigma_{xy}(\lam)
\]
whenever $0 \le s \le t$, $x := \xi(s)$, $y := \xi(t)$, and 
$\lam \in [0,1]$. It follows that for any two $\sig$-rays $\xi,\eta$
the map $t \mapsto d(\xi(a + \alpha t),\eta(b + \beta t))$ 
is convex on $\R_+$ for all $a,\alpha,b,\beta \ge 0$. 

The {\em geometric boundary\/} $\di_\sig X$ is the set of equivalence 
classes of mutually asymptotic $\sig$-rays in $X$, and we write
\[
\olX_\sig := X \cup \di_\sig X.
\]
For a unified treatment of the two parts of $\olX_\sig$ it is convenient 
to associate with every pair $(o,x) \in X \times X$ the eventually constant
curve $\rho_{ox} \colon \R_+ \to X$, 
\begin{equation} \label{eq:rho-ox}
\rho_{ox}(t) := 
\begin{cases}
\sig_{ox}(t/d(o,x)) & \text{if $0 \le t < d(o,x)$,} \\
x                  & \text{if $t \ge d(o,x)$.}
\end{cases}
\end{equation}
As a preliminary remark we note that for any basepoint $o \in X$ and 
$r \in \R_+$, the radial retraction $\phi_r \colon X \to B(o,r)$ defined by
$\phi_r(x) := \rho_{ox}(r)$
satisfies 
\begin{equation} \label{eq:phi-r}
d(\phi_r(x),\phi_r(y)) \le \frac{2r}{d(o,x)} \,d(x,y)
\end{equation}
whenever $d(o,x) \ge d(o,y)$ and $d(o,x) > r$.
To see this, let $s := r/d(o,x)$ and $y' := \sig_{oy}(s)$.
We have $\phi_r(x) = \sig_{ox}(s)$, and since $\sig$ is conical,
$d(\phi_r(x),y') \le s\,d(x,y)$. Furthermore, 
$d(y',\phi_r(y)) = d(o,\phi_r(y)) - d(o,y') \le d(o,\phi_r(x)) - d(o,y')
\le d(\phi_r(x),y')$ 
and so $d(\phi_r(x),\phi_r(y)) \le d(\phi_r(x),y') + d(y',\phi_r(y)) 
\le 2s\,d(x,y)$.
In particular, $\phi_r$ is $2$-Lipschitz. The constant~2 is optimal, 
as one can see by looking at the space $\ell^2_\infty$ with $0$ 
as the basepoint, the map $\phi_1$, and the points $(1,1), (1+\eps,1-\eps)$. 

In order to prove that for every pair $(o,\bar x) \in X \times \di_\sig X$ 
there is a $\sig$-ray issuing from $o$ and representing the class $\bar x$,
we shall need the following estimate (compare Lemma~II.8.3 in~\cite{BriH} 
for the case of $\CAT(0)$ spaces).

\begin{Lem} \label{Lem:t-T-estimate}
Let $X$ be a metric space with a convex and consistent geodesic bicombing 
$\sig$, and let $o,p \in X$. Then for any $\sig$-ray $\xi$ with $\xi(0) = p$ 
we have
\[
d(\rho_{ox}(t),\rho_{oy}(t)) \le \frac{2t\,d(o,p)}{T - d(o,p)}
\]
whenever $T > 2d(o,p)$, $x,y \in \xi([T,\infty))$, and 
$0 \le t \le T - 2\,d(o,p)$. 
\end{Lem}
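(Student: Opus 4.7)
The plan is to reduce the two-parameter estimate to a one-parameter comparison along the single segment $\sig_{oy}$, by inserting a well-chosen intermediate point. Without loss of generality, write $x = \xi(s)$ and $y = \xi(s')$ with $T \le s \le s'$ (the case $x = y$ being immediate), and set $a := d(o,x)$, $b := d(o,y)$. From $|a - s|, |b - s'| \le d(o,p)$ we get $a, b \ge T - d(o,p)$, and the hypothesis $t \le T - 2\,d(o,p)$ gives $t < a$ and $t < b$, so by~\eqref{eq:rho-ox} we have $\rho_{ox}(t) = \sig_{ox}(t/a)$ and $\rho_{oy}(t) = \sig_{oy}(t/b)$.

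The central observation is that the point $w := \sig_{oy}(s/s')$ on the segment $\sig_{oy}$ is close to $x$. Indeed, because $\xi$ is a $\sig$-ray, $x = \xi(s) = \sig_{py}(s/s')$; the conical inequality applied to the pair $\sig_{oy}, \sig_{py}$ at parameter $s/s'$ therefore yields
\[
d(w,x) \le (1 - s/s')\,d(o,p) = \frac{s'-s}{s'}\,d(o,p).
\]

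Next, consistency gives $\sig_{ow}(\lambda) = \sig_{oy}(\lambda s/s')$ for $\lambda \in [0,1]$, and convexity applied to the pair $\sig_{ox}, \sig_{ow}$ yields $d(\sig_{ox}(\lambda), \sig_{oy}(\lambda s/s')) \le \lambda\,d(x,w)$. Setting $\lambda := t/a$ so that $\sig_{ox}(\lambda) = \rho_{ox}(t)$, this controls the distance from $\rho_{ox}(t)$ to the auxiliary point $\sig_{oy}(ts/(as'))$ on $\sig_{oy}$. The remaining task is to compare the latter with $\rho_{oy}(t) = \sig_{oy}(t/b)$, which is a reparametrization along a single geodesic: the distance equals $t\,|sb - s'a|/(as')$, and writing $sb - s'a = s(b - s') + s'(s - a)$ together with $|a - s|, |b - s'| \le d(o,p)$ gives $|sb - s'a| \le (s+s')\,d(o,p)$. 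Adding the two estimates yields
\[
d(\rho_{ox}(t),\rho_{oy}(t)) \le \frac{t\,d(o,p)}{a s'}\bigl[(s' - s) + (s + s')\bigr] = \frac{2t\,d(o,p)}{a},
\]
and the conclusion follows from $a \ge T - d(o,p)$.

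The only genuinely creative step is the choice of the intermediate point $w$; everything else is a direct consequence of consistency, convexity, and the two triangle-inequality bounds on $|a - s|$ and $|b - s'|$. The hypothesis $T > 2\,d(o,p)$ is used precisely to keep $\lambda = t/a$ in $[0,1]$ and to make $a$ comparable with $T$ in the denominator of the final bound.
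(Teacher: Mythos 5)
Your argument is correct and follows essentially the same route as the paper: the key step in both is the auxiliary point $w=\sig_{oy}(s/s')$ (the paper's $y'$), compared with $x=\sig_{py}(s/s')$ via the conical inequality to get $d(x,w)\le(1-s/s')\,d(o,p)$. The only difference is that where the paper then invokes the radial-retraction estimate~\eqref{eq:phi-r} to compare $\phi_t(x)$ with $\phi_t(w)=\rho_{oy}(t)$, you carry out an equivalent computation inline (convexity from $o$ plus a reparametrization along $\sig_{oy}$), arriving at the same bound $2t\,d(o,p)/d(o,x)$.
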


\begin{proof} 
Assume $d(o,x) \le d(o,y)$ and let $s := d(p,x)/d(p,y)$ and 
$y' := \sig_{oy}(s)$. 
Since $\xi$ is a $\sig$-ray, we have $x = \sig_{py}(s)$.
As $\sig$ is conical, 
\begin{equation} \label{eq:xy-op}
d(x,y') \le (1-s)\,d(p,o) \le d(o,p).
\end{equation}
Now $d(o,x) \ge d(p,x) - d(o,p) \ge T - d(o,p)$ and so
$d(o,y') \ge d(o,x) - d(x,y') \ge T - 2\,d(o,p)$. 
Hence, for $0 \le t \le T - 2\,d(o,p)$, we have
$\phi_t(x) = \rho_{ox}(t)$ and $\phi_t(y')=\rho_{oy}(t)$,
and~\eqref{eq:phi-r} gives the result.
\end{proof}

The next result now follows by a standard procedure.

\begin{Prop} \label{Prop:rho-o-x}
Let $X$ be a complete metric space with a convex and consistent
geodesic bicombing $\sig$. Then for every pair 
$(o,\bar x) \in X \times \di_\sig X$ 
there is a unique $\sig$-ray $\rho_{o\bar x}$ with $\rho_{o\bar x}(0) = o$ that 
represents the class $\bar x$. Furthermore, if $r \ge 0$ and 
$x = \rho_{o\bar x}(r)$, then $\rho_{x\bar x}(t) = \rho_{o\bar x}(r+t)$ for all 
$t \in \R_+$. 
\end{Prop}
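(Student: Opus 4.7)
I would prove existence by constructing $\rho_{o\bar x}$ as the limit of the radial retractions from~\eqref{eq:rho-ox}. Pick any $\sig$-ray $\xi$ representing $\bar x$ and set $p := \xi(0)$. Applied to the pair $\xi(T),\xi(T')$ with $T \le T'$ (both lying in $\xi([T,\infty))$), Lemma~\ref{Lem:t-T-estimate} yields
\[
d(\rho_{o,\xi(T)}(t),\rho_{o,\xi(T')}(t)) \le \frac{2t\,d(o,p)}{T-d(o,p)}
\]
for $0 \le t \le T - 2\,d(o,p)$. So $T \mapsto \rho_{o,\xi(T)}(t)$ is Cauchy for each $t$, and by completeness of $X$ converges to a point $\rho_{o\bar x}(t)$. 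The estimate is uniform in $t$ on each bounded interval $[0,t_0]$, so $\rho_{o\bar x}$ inherits the unit speed isometric embedding property from its approximants, i.e.\ is a ray in $X$.

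Next I check that $\rho_{o\bar x}$ is a $\sig$-ray representing $\bar x$. For the first, fix $0 \le s \le t$, $\lam \in [0,1]$, and $T$ large; consistency~\eqref{eq:consistent} of $\sig$ applied to the reparametrized geodesic $\rho_{o,\xi(T)}|_{[0,d(o,\xi(T))]}$ gives
\[
\sig_{\rho_{o,\xi(T)}(s),\,\rho_{o,\xi(T)}(t)}(\lam) = \rho_{o,\xi(T)}((1-\lam)s+\lam t),
\]
and passing to $T \to \infty$ — using that $(x,y) \mapsto \sig_{xy}(\lam)$ is continuous by the conical inequality~\eqref{eq:conical} — yields the $\sig$-ray condition. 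For asymptoticity set $\lam := t/d(o,\xi(T))$; the conical inequality bounds $d(\sig_{o,\xi(T)}(\lam),\sig_{p,\xi(T)}(\lam))$ by $(1-\lam)d(o,p) \le d(o,p)$, and since $\xi$ is a $\sig$-ray from $p$ we have $\sig_{p,\xi(T)}(\lam) = \xi(\lam T)$, which lies at distance $|t-\lam T| \le t\,d(o,p)/(T-d(o,p))$ from $\xi(t)$; letting $T \to \infty$ gives $d(\rho_{o\bar x}(t),\xi(t)) \le d(o,p)$, so $\rho_{o\bar x}$ and $\xi$ are asymptotic.

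Uniqueness follows from convexity. Suppose $\rho,\rho'$ are two $\sig$-rays from $o$ representing $\bar x$; then both are asymptotic to $\xi$ and hence to each other, so $f(t) := d(\rho(t),\rho'(t))$ is bounded. By the convexity property of $\sig$-rays stated just after the definition in this section (with $a=b=0$ and $\alpha=\beta=1$), $f$ is convex on $\R_+$. Since $f \ge 0$ and $f(0) = 0$, the slope $f(t)/t$ is nondecreasing in $t$, and boundedness forces $f(t)/t \to 0$, whence $f \equiv 0$ and $\rho = \rho'$.

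Finally, for $x = \rho_{o\bar x}(r)$, the shifted curve $\tilde\rho(t) := \rho_{o\bar x}(r+t)$ is a unit speed ray from $x$, and the $\sig$-ray identity for $\rho_{o\bar x}$ applied to the parameter pair $r+s,\,r+t$ immediately shows $\tilde\rho$ is again a $\sig$-ray. It is at bounded distance from $\xi$ (since $d(\tilde\rho(t),\xi(t)) \le d(\rho_{o\bar x}(r+t),\xi(r+t)) + r \le d(o,p) + r$), hence represents $\bar x$, so by uniqueness $\tilde\rho = \rho_{x\bar x}$. I expect the main obstacle to be the asymptoticity bound $d(\rho_{o\bar x}(t),\xi(t)) \le d(o,p)$ in the second paragraph: it requires carefully comparing two different parametrizations of geodesics terminating at $\xi(T)$ (one emanating from $o$, the other from $p$) and is the one place where the conical hypothesis and the $\sig$-ray structure of $\xi$ must be combined with some care; the remaining steps are formal limit taking plus a standard convexity argument.
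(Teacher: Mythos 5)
Your proposal is correct and follows essentially the same route as the paper: construct $\rho_{o\bar x}$ as the limit of the finite generalized rays $\rho_{o\xi(T)}$ via the Cauchy estimate of Lemma~\ref{Lem:t-T-estimate}, verify asymptoticity by the conical comparison of $\sig_{o\xi(T)}$ with $\sig_{p\xi(T)}=\xi$ exactly as in~\eqref{eq:xy-op}, and obtain uniqueness (and hence the shift identity) from the fact that a nonnegative bounded convex function on $\R_+$ vanishing at $0$ is identically zero. The only difference is that you spell out a few limit-passing steps the paper leaves implicit.
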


\begin{proof}
Let $\xi$ be a $\sig$-ray in the class $\bar x$, and let 
$p := \xi(0)$. For $n = 1,2,\dots$, put $\rho_n := \rho_{o\xi(n)}$.
It follows from the preceding lemma that for every fixed $t \ge 0$ the 
sequence $\rho_n(t)$ is Cauchy. In the limit one obtains a $\sig$-ray 
$\rho_{o\bar x}$ issuing from~$o$. As in~\eqref{eq:xy-op}, we have
$d(\xi(t),\rho_n[t\,d(o,\xi(n))/n]) \le d(o,p)$ for $0 \le t \le n$, 
hence $d(\xi(t),\rho_{o\bar x}(t)) \le d(o,p)$ for all $t \ge 0$. 
In particular, $\rho_{o\bar x}$ is asymptotic to~$\xi$.
Finally, if $\rho'$ is another $\sig$-ray issuing from $o$ and asymptotic
to~$\xi$, then $t \mapsto d(\rho_{o\xi}(t),\rho'(t))$ is a non-negative, 
bounded, convex function on $\R_+$ that vanishes at $0$, 
so $\rho' = \rho_{o\xi}$. From this uniqueness property, the last
assertion of the proposition is clear.
\end{proof}

A natural topology on $\olX_\sig = X \cup \di_\sig X$ may be described 
in different ways. First we fix a basepoint $o \in X$ and consider the
set
\[
\olR_{\sig,o} := \{\rho_{o\bar x} : \bar x \in \olX_\sig\}
= \{\rho_{ox} : x \in X\} \cup \{\rho_{o\bar x} : \bar x \in \di_\sig X\}
\]
of {\em generalized $\sig$-rays based at $o$}, given by~\eqref{eq:rho-ox}
and Proposition~\ref{Prop:rho-o-x}. 
We equip $\olR_{\sig,o}$ with the topology of 
uniform convergence on compact subsets of $\R_+$. Clearly $\olR_{\sig,o}$ is
compact if and only if $X$ is proper, as a consequence of the 
Arzel\`a--Ascoli theorem. The topology of $\olR_{\sig,o}$ agrees, 
under canonical identification, with the {\em cone topology\/} 
on $\olX_\sig$, a basis of which is given by the sets
\begin{equation} \label{eq:cone-top}
U_o(\bar x,t,\eps) 
:= \{ \bar y \in \olX_\sig : d(\rho_{o\bar x}(t),\rho_{o\bar y}(t)) < \eps \}
\end{equation}
for $\bar x \in \olX_\sig$ and $t,\eps > 0$. 
Note that since $\phi_r$ is $2$-Lipschitz, we have  
\begin{equation} \label{eq:rho-r-t}
d(\rho_{o\bar x}(r),\rho_{o\bar y}(r)) \le 2\,d(\rho_{o\bar x}(t),\rho_{o\bar y}(t))
\quad \text{for all $r \in [0,t]$.}
\end{equation}
Note also that $U_o(x,t,\eps)$ is just 
the open ball $U(x,\eps)$ in case $t \ge d(o,x) + \eps$.
It follows readily from the following lemma that this topology 
on $\olX_\sig$ is independent of the choice of basepoint $o$.

\begin{Lem} \label{Lem:basept-inv}
Given $o \in X$, $\bar x \in \di_\sig X$, $\eps,t > 0$, and $p \in X$, there 
exists $T > 0$ such that $U_p(\bar x,\eps/4,T) \sub U_o(\bar x,\eps,t)$.  
\end{Lem}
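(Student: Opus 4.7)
The plan is to split the distance $d(\rho_{o\bar x}(t), \rho_{o\bar y}(t))$ by means of two auxiliary interior points on the rays issuing from $p$. Concretely, for any $\bar y$ in the corresponding $p$-based neighborhood (with parameter $T$ and radius $\eps/4$), I set $x_T := \rho_{p\bar x}(T)$ and $y_T := \rho_{p\bar y}(T)$, so that $d(x_T, y_T) < \eps/4$ by definition of the cone topology at $p$.

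To control the outer terms $d(\rho_{o\bar x}(t), \rho_{o x_T}(t))$ and $d(\rho_{o\bar y}(t), \rho_{o y_T}(t))$, I would apply Lemma~\ref{Lem:t-T-estimate} to the $\sig$-ray $\xi := \rho_{p\bar x}$ (which issues from $p$) with $x = \xi(T) = x_T$ and $y = \xi(n)$ for $n \ge T$, obtaining
\[
d(\rho_{o x_T}(t), \rho_{o \xi(n)}(t)) \le \frac{2t\,d(o,p)}{T - d(o,p)}
\]
whenever $T > 2\,d(o,p)$ and $0 \le t \le T - 2\,d(o,p)$. Letting $n \to \infty$ and using the uniform convergence $\rho_{o\xi(n)} \to \rho_{o\bar x}$ on compact subsets of $\R_+$ established in the proof of Proposition~\ref{Prop:rho-o-x}, the same bound persists with $\bar x$ in place of $\xi(n)$. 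The analogous estimate for $y_T$ and $\bar y$ is obtained identically using the ray $\rho_{p\bar y}$.

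For the middle term, I would exploit the fact that the radial retraction $\phi_t \colon X \to B(o,t)$ is $2$-Lipschitz (as observed immediately after \eqref{eq:phi-r}), whence
\[
d(\rho_{o x_T}(t), \rho_{o y_T}(t)) = d(\phi_t(x_T), \phi_t(y_T)) \le 2\,d(x_T, y_T) < \frac{\eps}{2}.
\]
Combining the three estimates via the triangle inequality yields
\[
d(\rho_{o\bar x}(t), \rho_{o\bar y}(t)) \le \frac{4t\,d(o,p)}{T - d(o,p)} + \frac{\eps}{2},
\]
and one only has to choose $T$ large enough — say $T > d(o,p) + \max\bigl\{t + 2\,d(o,p),\; 8t\,d(o,p)/\eps\bigr\}$ — to make the right-hand side strictly less than $\eps$, which is exactly what the inclusion into $U_o(\bar x, t, \eps)$ requires.

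The main technical subtlety I anticipate is the passage to the limit $n \to \infty$ in Lemma~\ref{Lem:t-T-estimate}: that lemma is formulated for two points on a common $\sig$-ray, so one must invoke the compact-uniform convergence of the approximating rays $\rho_{o\xi(n)}$ from the proof of Proposition~\ref{Prop:rho-o-x} to extend the bound to the limit ray $\rho_{o\bar x}$. Once this step is justified, the remainder is simply a matter of choosing $T$ large enough to balance the basepoint-change error $\tfrac{2t\,d(o,p)}{T - d(o,p)}$ against the given proximity threshold $\eps/4$.
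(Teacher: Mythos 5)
Your argument is correct and follows essentially the same route as the paper's proof: split $d(\rho_{o\bar x}(t),\rho_{o\bar y}(t))$ at the points $\rho_{p\bar x}(T)$ and $\rho_{p\bar y}(T)$, bound the two outer terms via Lemma~\ref{Lem:t-T-estimate} combined with the limit construction of Proposition~\ref{Prop:rho-o-x}, and bound the middle term by the $2$-Lipschitz property of $\phi_t$. You merely make explicit the quantitative choice of $T$ and the passage to the limit $n\to\infty$ that the paper leaves implicit.
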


\begin{proof}
It follows from Lemma~\ref{Lem:t-T-estimate} and the construction of the 
ray $\rho_{o\bar x}$ in Proposition~\ref{Prop:rho-o-x}
that if $T$ is chosen sufficiently large, depending on $d(o,p)$ and 
$\eps,t$, and if $x := \rho_{p\bar x}(T)$, then 
\[
d(\rho_{ox}(t),\rho_{o\bar x}(t)) \le \eps/4.
\]
Likewise, for any point $\bar y \in \olX_\sig$, if $y := \rho_{p\bar y}(T)$
and $d(p,y)$ is large enough, then
\[
d(\rho_{oy}(t),\rho_{o\bar y}(t)) \le \eps/4.
\]
Now if $\bar y \in U_p(\bar x,T,\eps/4)$, that is, $d(x,y) < \eps/4$, then
\[
d(\rho_{ox}(t),\rho_{oy}(t)) \le 2\,d(x,y) < \eps/2
\]
by~\eqref{eq:phi-r}, provided $d(o,x),d(o,y) > t$. We conclude that
$d(\rho_{o\bar x}(t),\rho_{o\bar y}(t)) < \eps$ and thus 
$\bar y \in U_o(\bar x,t,\eps)$ for sufficiently large $T$.
\end{proof}

Next we equip $\olX_\sig$ with the metric defined by
\begin{equation}\label{eq:D-o}
D_o(\bar x,\bar y) 
:= \int_0^\infty d(\rho_{o\bar x}(s),\rho_{o\bar y}(s)) \, e^{-s} \, ds
\end{equation}
(compare Sect.~8.3.B in~\cite{Gro}).
We have $D_o(o,\bar x) \le 1$, with equality 
if and only if $\bar x \in \di_\sig X$. 
For $d(\rho_{o\bar x}(t),\rho_{o\bar y}(t)) = a$, 
\eqref{eq:rho-r-t}~yields
\[
\int_t^\infty \frac{a}{2}\, e^{-s} \,ds
\le D_o(\bar x,\bar y) 
\le \int_0^t 2a\, e^{-s} \,ds + \int_t^\infty 2s\, e^{-s} \,ds,
\]
and it follows easily from these estimates that the metric~\eqref{eq:D-o} 
induces the cone topology. 
Observe also that if $d(o,x) = R$ and $y = \phi_r(x)$ for some 
$0 \le r \le R$, then 
\[
D_o(x,y) 
= \int_r^R (s-r)\,e^{-s}\,ds + \int_R^\infty (R-r)\,e^{-s}\,ds
= e^{-r} - e^{-R}.
\]
In particular, for any $\sig$-ray $\xi$ issuing from $o$,
the curve $\lam \mapsto \xi(-\log(1-\lam))$, $\lam \in [0,1)$, is a unit speed 
geodesic with respect to $D_o$.
Accordingly, for $\lam \in [0,1]$, we define the radial retraction 
$\psi_\lam \colon \olX_\sig \to 
B_{D_o}(o,\lam) = \{\bar y: D_o(o,\bar y) \le \lam\}$ such that $\psi_1 = \id$ 
and $\psi_\lam(\bar x) := \rho_{o\bar x}(t)$ for $\lam < 1$ and 
$t := -\log(1-\lam)$. In this latter case, the generalized ray $\rho_{ox}$ 
with endpoint $x := \psi_\lam(\bar x)$ agrees with $\phi_t \circ \rho_{o\bar x}$, 
and since $\phi_t$ is $2$-Lipschitz we obtain
\begin{equation} \label{eq:psi-lam-lam}
D_o(\psi_\lam(\bar x),\psi_\lam(\bar y)) 
= \int_0^\infty d(\phi_t(\rho_{o\bar x}(s)),\phi_t(\rho_{o\bar y}(s)))\,e^{-s}\,ds 
\leq 2\,D_o(\bar x,\bar y)
\end{equation}
for all $\bar x,\bar y \in \olX_\sig$. Thus $\psi_\lam$ is $2$-Lipschitz with 
respect to $D_o$. Finally, we note that if $\bar x \in \olX_\sig$ and 
$\lam,\mu \in [0,1]$,
then clearly 
\begin{equation} \label{eq:psi-lam-mu}
D_o(\psi_\lam(\bar x),\psi_\mu(\bar x)) \le |\lam - \mu|,
\end{equation}
with equality when $D_o(o,\bar x) \geq \max\{\lam,\mu\}$.
Now we turn to the result stated in the introduction.

\begin{proof}[Proof of Theorem~\ref{Thm:intro-4}]
The map $(\bar x,\lam) \mapsto \psi_{1-\lam}(\bar x)$ is continuous 
on $\olX_\sig \times [0,1]$ by~\eqref{eq:psi-lam-lam} 
and~\eqref{eq:psi-lam-mu} and contracts $\olX_\sig$ to $o$,
so $\olX_\sig$ is contractible.

To show that $\olX_\sig$ is locally contractible, we prove 
that in fact each of the sets $U_o(\bar x,t,\eps)$ for 
$\bar x \in \olX_\sig$ and $t,\eps > 0$ (see~\eqref{eq:cone-top}) 
is contractible. The same map as above, 
but restricted to $U_o(\bar x,t,\eps) \times [0,e^{-t}]$, contracts 
$U_o(\bar x,t,\eps)$ to the subset $U(\rho_{o\bar x}(t),\eps) \cap B(o,t)$, 
which as an intersection of balls is $\sig$-convex and hence itself 
contractible.

To prove that $\di_\sig X$ is a $Z$-set in $\olX_\sig$, let an open 
set $U$ in $\olX_\sig$ be given, and assume that $U \ne \es,\olX_\sig$. 
We want to find a homotopy $h \colon U \times [0,1] \to U$
from the identity on $U$ to a map into $U \setminus \di_\sig X$ such that 
the restriction of $h$ to $(U \setminus \di_\sig X) \times [0,1]$ takes 
values in $U \setminus \di_\sig X$. To this end, we note that the function
$\mu \colon U \to \R$ defined by 
\[
\mu(\bar x) := D_o(o,\bar x) - \frac12 \inf \{ D_o(\bar x,\bar y) : 
\bar y \in \olX_\sig \setminus U \}
\]
is Lipschitz continuous with respect to $D_o$ and satisfies 
$\mu(\bar x) < D_o(o,\bar x)$ for all $\bar x \in U$ because $U$ is open.
It is then easy to see that 
$h(\bar x,\lam) := \psi_{\max\{1-\lam,\mu(\bar x)\}}(\bar x)$ serves the purpose.

It remains to show that $\olX_\sig$ is an AR (absolute retract).
In case $X$ has finite topological dimension, we have
\[
\dim(\olX_\sig) = \dim(X) < \infty.
\]
Indeed, for every $\eps \in (0,1)$, any open covering of the $D_o$-ball 
$B_{D_o}(o,1-\eps)$ with mesh $\le \eps$ gives rise, via $\psi_{1-\eps}^{\,-1}$,
to an open covering of $\olX_\sig$ with mesh $\le 3\eps$ and the same 
multiplicity (see, for instance, \cite{BuyS} for the definitions).
It is then a standard result that contractible and locally contractible 
metrizable spaces of finite dimension are absolute retracts (see~\cite{Dug}). 
However, finite dimension is not needed.
Every metric space $X$ with a conical geodesic bicombing is
{\em strictly equiconnected}, as defined in~\cite{Him}, so $X$ is an AR by
Theorem~4 in that paper (see also~\cite{Dug2}). 
Now, by Corollary~6.6.7 in~\cite{Sak} (a result attributed to O.~Hanner 
and S.~Lefschetz), $\olX_\sig$ is an ANR (absolute neighborhood retract) 
as it contains $X$ as a homotopy dense subset; alternatively, one can give 
a short direct argument along the lines of Lemma~1.4 in~\cite{Tor}.
Thus $\olX_\sig$ is a contractible ANR or, equivalently, an AR (see 
Corollary~6.2.9 in~\cite{Sak}). 
\end{proof}

As a concluding remark we note that, presumably, the spaces $\di_\sig X$ 
and $\olX_\sig$ depend on the choice of $\sig$. 
However, if $X$ is a complete (not necessarily proper) Gromov hyperbolic 
space with a convex and consistent bicombing $\sig$, then 
$\olX_\sig$ is independent of $\sig$ and furthermore $\di_\sig X$ agrees with 
the boundary $\di_\infty X$ defined in terms of sequences converging to
infinity. Indeed, a modification of Lemma~\ref{Lem:t-T-estimate} and 
Proposition~\ref{Prop:rho-o-x} first shows that for every ray $\xi$ in $X$ 
there is a $\sig$-ray asymptotic to $\xi$ and issuing from the 
same point. As in the proof of Lemma~\ref{Lem:basept-inv} one can then 
conclude that distinct bicombings yield homeomorphic boundaries.


\section*{Appendix: Proof of Theorem~\ref{Thm:dress}}

We use the notation from Sect.~4.

First we show the ``if'' direction.
Let $Y \sub X$ be a finite set. If $|Y| \le 2n + 1$, the dimension of 
$\E(Y)$ is at most $n$. Now suppose
that $|Y| \ge 2n + 2$, $f \in \E(Y)$, and $Z \sub Y$ is a set with 
$|Z| = 2n + 2$ and with a fixed point free involution $i \colon Z \to Z$ 
such that $\{z,i(z)\} \in A(f)$ for every $z \in Z$.
By assumption, there exists a fixed point free bijection $j \colon Z \to Z$ 
such that $j \ne i$ and
\[
\sum_{z \in Z} d(z,i(z)) \le \sum_{z \in Z} d(z,j(z)).
\]
Since $f(z) + f(i(z)) = d(z,i(z))$ and 
$d(z,j(z)) \le f(z) + f(j(z))$, this gives
\[
\sum_{z \in Z} f(z) + f(i(z)) \le \sum_{z \in Z} f(z) + f(j(z)).
\]
However, since both $i$ and $j$ are bijections, these two sums  
agree, so each of the inequalities $d(z,j(z)) \le f(z) + f(j(z))$ must in fact
be an equality, that is, $\{z,j(z)\} \in A(f)$. There is at least one 
$z \in Z$ such that $j(z) \ne i(z)$, so the graph with vertex
set $Z$ and edge set $\bigcup_{z \in Z}\{\{z,i(z)\},\{z,j(z)\}\}$
has at most $n$ connected components.
As this holds for every $Z$ and $i$ as above,
we conclude that also the graph $(Y,A(f))$ has no more than
$n$ components. Since $f \in \E(Y)$ was arbitrary, this shows that the 
dimension of $\E(Y)$ is less than or equal to~$n$.

Now we prove the other implication. Suppose that $Z \sub X$ is a set with 
$|Z| = 2n+2$, and $i \colon Z \to Z$ is a fixed point free involution.
Let $Z_2$ denote the set of all subsets of cardinality two of $Z$.
The involution $i$ selects a subset $Z_i := \{\{z,i(z)\} : z \in Z\}$
of $Z_2$ of $n+1$ disjoint pairs.
For every function $h \colon Z \to \R$ we consider the set $W(h)$ of all
$w \colon Z_2 \to \R$ such that $w \le 0$ on~$Z_i$, $w \ge 0$ 
on~$Z_2 \sm Z_i$, and 
\[
\sum_{z' \in Z \sm \{z\}} w(\{z,z'\}) = h(z)
\]
for all $z \in Z$. First we observe that if $z \in Z$, and if $\{x,y\} \in Z_i$
is chosen such that $z \not\in \{x,y\}$, then the function
$w_{z,\{x,y\}} := (\del_{\{x,z\}} + \del_{\{y,z\}} - \del_{\{x,y\}})/2$
on $Z_2$ belongs to $W(\del_z)$, and $w_{i(z),\{x,y\}} - \del_{\{z,i(z)\}}$ 
is in $W(-\del_z)$. It follows that $W(h)$ is non-empty 
for every $h \colon Z \to \R$. Put
\[
\mu(h) := \sup\{ S(w) : w \in W(h) \}, \quad
S(w) := \sum_{\{x,y\} \in Z_2} w(\{x,y\}) d(x,y);
\]
note that $\mu(h) > -\infty$ but possibly $\mu(h) = \infty$. 
For $w_{z,\{x,y\}}$ as above we have 
$S(w_{z,\{x,y\}}) \ge 0$ by the triangle inequality, thus
$\mu(\del_z) \ge 0$ for all $z \in Z$. Furthermore, 
since $0 \in W(0)$, also $\mu(0) \ge 0$. In fact, if $w \in W(0)$, 
then $\lam w \in W(0)$ for all $\lam \ge 0$, so we have 
either $\mu(0) = 0$ or $\mu(0) = \infty$. 

In case $\mu(0) = \infty$, choose $w \in W(0)$ with $S(w) > 0$ such that
no $w' \in W(0)$ with $S(w') > 0$ has strictly smaller support.
It follows that every nonzero $v \in W(0)$ with support $\spt(v) \sub \spt(w)$
satisfies $S(v) > 0$, for if $\lam > 0$ is the maximal number
with the property that $|\lam v| \le |w|$, then $v' := w - \lam v$ belongs 
to $W(0)$ and $\spt(v')$ is a strict subset of $\spt(w)$,
so $S(v') \le 0$ and $\lam S(v) = S(w) - S(v') > 0$. 
(In fact $\lam v = w$, because $v' \ne 0$ would likewise imply $S(v') > 0$.)
Since for fixed $z$ the sum of the weights $w(\{z,z'\})$ is zero, 
it is not difficult to see that there exist pairwise distinct points 
$z_0,z_1,\dots,z_l$, with $l \ge 1$, such that 
$w(\{z_k,i(z_k)\}) < 0 < w(\{i(z_k),z_{k+1}\})$ 
for $k = 0,\dots,l$, where $z_{l+1} := z_0$. Then the function
\[
v := \sum_{k = 0}^l - \del_{\{z_k,i(z_k)\}} + \del_{\{i(z_k),z_{k+1}\}}
\]
on $Z_2$ belongs to $W(0)$, and $\spt(v) \sub \spt(w)$. Hence $S(v) > 0$. 
This means that 
\[
\sum_{k = 0}^l d(z_k,i(z_k)) < \sum_{k = 0}^l d(i(z_k),z_{k+1}).
\]
The points $i(z_0),\dots,i(z_l)$ are distinct, so there is a well-defined map
$j \colon Z \to Z$ such that $j(i(z_k)) = z_{k+1}$ for $k = 0,\dots,l$ and
$j(z) = i(z)$ otherwise. Note that $j$ is fixed point free since 
$\{i(z_k),z_{k+1}\} \in Z_2$ and $i(z) \ne z$ for all $z \in Z$.
Furthermore, $j$ is injective because $i$ is injective,
$z_0,\dots,z_l$ are distinct, and $j(z) = i(z) = z_k$ would imply
$j(z) = j(i(z_k)) = z_{k+1} \ne z_k$.
Now it is clear that~\eqref{eq:dress} holds, even with strict inequality. 
Note that this part of the proof does not use the assumption
$\dim_\comb(X) \le n$. 

It remains to consider the case $\mu(0) = 0$. Let $h,h' \in \R^Z$. For all
$w \in W(h)$ and $w' \in W(h')$ we have 
$w + w' \in W(h + h')$, therefore
\[
\mu(h) + \mu(h') \le \mu(h + h');
\]
in particular $\mu(h) \le \mu(0) - \mu(-h) = -\mu(-h) < \infty$. Put 
\[
\nu(h) := \frac{\mu(h) - \mu(-h)}{2}.
\] 
We have $\mu(h) - \mu(-h') \ge \mu(h+h')$ and 
$\mu(h') - \mu(-h) \ge \mu(h+h')$, so
\[
\nu(h) + \nu(h') \ge \mu(h+h').
\]
We have already observed that $\mu(\del_z) \ge 0$ for all $z \in Z$, 
hence $-\mu(-\del_z) \ge 0$ and $\nu(\del_z) \ge 0$.
If $\{x,y\} \in Z_2 \sm Z_i$, then 
$\del_{\{x,y\}} \in W(\del_x + \del_y)$, 
thus
\[
\nu(\del_x) + \nu(\del_y) \ge \mu(\del_x + \del_y) \ge d(x,y).
\]
Similarly, if $\{x,y\} \in Z_i$, then $-\del_{\{x,y\}} \in W(-\del_x-\del_y)$, 
hence
\[
\nu(-\del_x) + \nu(-\del_y) \ge \mu(-\del_x - \del_y) 
\ge -d(x,y)
\]
and so $\nu(\del_x) + \nu(\del_y) = -\nu(-\del_x) - \nu(-\del_y)
\le d(x,y)$. Now it is clear that there exists a function 
$f \in \R^Z$ such that $f(z) \ge \nu(\del_z)$ for all $z \in Z$ and 
$f(x) + f(y) = d(x,y)$ for all $\{x,y\} \in Z_i$.
Hence, $f \in \Del(Z)$ and $Z_i \sub A(f)$, so in fact $f \in \E(Z)$.
Since $\dim_\comb(X) \le n$ by assumption, we have $\rk(A(f)) \le n$, 
and there is no loss of generality in assuming that 
there is no $g \in \E(Z)$ with $Z_i \sub A(g)$ and $\rk(A(g)) > \rk(A(f))$.
Since $Z$ has at most $n$ even $A(f)$-components, some such 
component $Z'$ contains more than one of the $n+1$ edges 
in $Z_i$. Note that $i$ maps $Z'$ onto $Z'$. 
We claim that for every $z' \in Z'$ there is a 
$z'' \in Z' \setminus \{z',i(z')\}$ such that $\{z',z''\} \in A(f)$.
If, to the contrary, there were a $z' \in Z'$ with no such $z''$, 
we could choose $g \in \R^Z$ such that $f(z') > g(z') > d(z',z) - f(z)$
and $g(z) = f(z)$ for all $z \in Z \setminus \{z',i(z')\}$, and
$g(i(z')) = d(z',i(z')) - g(z')$. This function would satisfy 
$\rk(A(g)) > \rk(A(f))$ and $Z_i \sub A(g)$, in contradiction to the 
assumption on $f$. 
It follows easily that there exist $z_0,z_1,\dots,z_l \in Z'$, with $l \ge 1$, 
such that $\{z_0,i(z_0)\},\dots,\{z_l,i(z_l)\}$ are pairwise disjoint
and $\{i(z_k),z_{k+1}\} \in A(f)$ for $k = 0,\dots,l$, where $z_{l+1} = z_0$.
Now 
\[
\sum_{k=0}^l d(z_k,i(z_k)) = \sum_{k=0}^l f(z_k) + f(i(z_k))
= \sum_{k=0}^l d(i(z_k),z_{k+1}).
\]
Putting $j(i(z_k)) = z_{k+1}$ and $j(z_{k+1}) = i(z_k)$ for $k = 0,\dots,l$ and  
$j(z) = i(z)$ otherwise, we get a fixed point free involution 
$j \colon Z \to Z$ distinct from $i$ such that~\eqref{eq:dress} 
holds with equality. 

\subsubsection*{Acknowledgments}
We thank Tadeusz Januszkiewicz, Alexander Lytchak, and 
Viktor Schroeder for helpful discussions. We acknowledge support from 
the Swiss National Science Foundation.


\addcontentsline{toc}{section}{References}

\bigskip\noindent
D.~Descombes ({\tt dominic.descombes@math.ethz.ch}),\\ 
U.~Lang ({\tt urs.lang@math.ethz.ch}),\\
Department of Mathematics, ETH Zurich, 8092 Zurich, Switzerland 



\begin{thebibliography}{99}

\bibitem{AleB}
S. B. Alexander, R. L. Bishop,
{The Hadamard--Cartan theorem in locally convex metric spaces},
Enseign. Math. 36 (1990), 309--320.

\bibitem{Bal}
W. Ballmann,
{Lectures on Spaces of Nonpositive Curvature},
Birkh\"auser 1995.

\bibitem{BesM}
M. Bestvina, G. Mess,
{The boundary of negatively curved groups},
J. Amer. Math. Soc. 4 (1991), 469--481.
 
\bibitem{BriH} 
M. Bridson, A. Haefliger, 
{Metric Spaces of Non-Positive Curvature}, 
Springer 1999.

\bibitem{BuyS}
S. Buyalo, V. Schroeder,
{Elements of Asymptotic Geometry},
Europ. Math. Soc. 2007.

\bibitem{CiaD}
F. Cianciaruso, E. De Pascale,
{Discovering the algebraic structure on the metric injective
envelope of a real Banach space},
Topology and its Appl. 78 (1997), 285--292.

\bibitem{Dre}
A. W. M. Dress, 
{Trees, tight extensions of metric spaces, and the cohomological dimension 
of certain groups: a note on combinatorial properties of metric spaces}, 
Adv. in Math. 53 (1984), 321--402.

\bibitem{Dug}
J. Dugundji,
{Absolute neighborhood retracts and local connectedness in arbitrary metric 
spaces},
Compositio Math. 13 (1958), 229--246.

\bibitem{Dug2}
J. Dugundji,
{Locally equiconnected spaces and absolute neighborhood retracts},
Fund. Math. 57 (1965), 187--193. 
 
\bibitem{FoeL}
T. Foertsch, A. Lytchak,
{The de Rham decomposition theorem for metric spaces}, 
Geom. Funct. Anal. 18 (2008), 120--143.

\bibitem{Gro}
M. Gromov, 
{Hyperbolic groups}, 
pp.~75--263 in: S. M. Gersten (Ed.),
Essays in Group Theory, Math. Sci. Res. Inst. Publ., Vol. 8, Springer 1987.

\bibitem{Him}
C. J. Himmelberg,
{Some theorems on equiconnected and locally equiconnected spaces},
Trans. Amer. Math. Soc. 115 (1965), 43--53.

\bibitem{HitL}
P. Hitzelberger, A. Lytchak,
{Spaces with many affine functions},
Proc. Amer. Math. Soc. 135 (2007), 2263--2271.

\bibitem{Isb}
J. R. Isbell, 
{Six theorems about injective metric spaces},
Comment. Math. Helv. 39 (1964), 65--76.

\bibitem{Isb2}
J. R. Isbell, 
{Injective envelopes of Banach spaces are rigidly attached},
Bull. Amer. Math. Soc. 70 (1964), 727--729.

\bibitem{Kel}
J. L. Kelley, 
{Banach spaces with the extension property},
Trans. Amer. Math. Soc. 72 (1952), 323--326.

\bibitem{Kle}
B. Kleiner,
{The local structure of length spaces with curvature bounded above}, 
Math. Z. 231 (1999), 409--456.

\bibitem{L}
U. Lang,
{Injective hulls of certain discrete metric spaces and groups},
J. Topol. Anal. 5 (2013), 297--331.

\bibitem{Nac}
L. Nachbin,
{A theorem of the Hahn--Banach type for linear transformations},
Trans. Amer. Math. Soc. 68 (1950), 28--46. 

\bibitem{Pap}
A. Papadopoulos,
{Metric Spaces, Convexity and Nonpositive Curvature},
Europ. Math. Soc. 2005.

\bibitem{Rao}
N. V. Rao,
{The metric injective hulls of normed spaces},
Topology and its Appl. 46 (1992), 13--21.

\bibitem{Sak}
K. Sakai, 
{Geometric Aspects of General Topology},
Springer Monographs in Mathematics, Springer 2013.

\bibitem{Tor}
H. Toru\'nczyk,
{Concerning locally homotopy negligible sets and characterization
of $l_2$-manifolds},
Fund. Math. 101 (1978), 93--110.

\end{thebibliography}
\end{document}